\title{Proof of the volume conjecture for twist knots}
\author{Sukuse Abe}
\address{SHINSHU UNIVERSITY
3-1-1 Asahi, Matsumoto, Nagano
390-8621 JAPAN}
\email{k9250979@kadai.jp}
\urladdr{}
\newtheorem{thm}{Theorem}[section]    
\newtheorem{lem}[thm]{Lemma}          
\newtheorem{conj}[thm]{Conjecture}
\newtheorem{cor}[thm]{Corollary}
\theoremstyle{definition}
\newtheorem{defn}[thm]{Definition}    
\begin{document}

\begin{abstract}    
We prove the volume conjecture for every twist knots by using an equivalence relation, complex analysis, analytic continuation, and the method of steepest descent on the basis of colored Jones polynomials.
\end{abstract}

\maketitle


\section{Introduction}
H.Murakami-J.Murakami \cite{MM} reformulated Kashaev's conjecture as the volume conjecture.
In this paper, we study the volume conjecture of every oriented twist knots.
The volume conjectures has so far proven to be true only for the following four sorts of knot:
the figure-eight knot by Ekholm (see \cite{15}),
$5_{2}$ by Kashaev-Yokota \cite{14} and Ohtsuki \cite{19}, torus knots by Kashaev-Tirkkonen\cite{13}
and the whitehead doubles of non-trivial torus knots by Zheng \cite{16}.
These volume conjectures were solved by calculating colored Jones polynomial.
We solve the volume conjecture for every twist knots by using some equivalence relations, complex analysis, analytic continuations, and the method of steepest descent on the basis of colored Jones polynomials.
The following is  the volume conjecture:
\begin{conj}
Let $K$ be an oriented hyperbolic knot, $J_N(K,q)$ be the colored Jones polynomial of $K$ and 
${\rm Vol}(K)$ be the hyperbolic volume of $S^3 \setminus K$. Then we obtain the following equality holds:
\begin{equation}\label{50}
\lim_{N\to \infty} \frac{2 \pi \log |J_{N}(K,{\rm exp}(2 \pi \sqrt{-1}/N))|}{N}={\rm Vol}(K).
\end{equation}
\end{conj}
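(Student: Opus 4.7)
My plan is to extract the asymptotic behavior of $J_N(K,\exp(2\pi\sqrt{-1}/N))$ directly from a state sum presentation and then match its dominant saddle value to the hyperbolic volume. First I would fix an oriented diagram $D$ of the hyperbolic knot $K$ and write $J_N(K,q)$ as the $U_q(\mathfrak{sl}_2)$ state sum associated with $D$ at $q=\exp(2\pi\sqrt{-1}/N)$. After regrouping crossings and caps/cups, this can be rewritten as a multiple $q$-hypergeometric sum indexed by a vector of state variables. Using the quantum factorial/Pochhammer identities at an $N$-th root of unity, the summand takes the form, up to a polynomial prefactor, $\exp\!\bigl(N\,\Phi_N(\underline{x})\bigr)$, where $\underline{x}=(x_1,\dots,x_d)$ is a suitably normalized summation vector and $\Phi_N$ converges uniformly on compact sets to a holomorphic function $\Phi$ built from Euler dilogarithms.

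The next step is to convert the multiple sum into a contour integral, so that complex analysis becomes available. Applying Poisson summation (or Euler--Maclaurin) to each index and absorbing the polynomial prefactor into a lower order error yields
\[
J_N\bigl(K,e^{2\pi\sqrt{-1}/N}\bigr)=\int_{C}\exp\!\bigl(N\,\Phi(\underline{x})\bigr)\,d\underline{x}\;+\;O\!\bigl(e^{N(\mathrm{Re}\,\Phi(\underline{x}_{*})-\eta)}\bigr)
\]
for some $\eta>0$. The ``equivalence relation, complex analysis and analytic continuation'' announced in the abstract would enter here: I would use Cauchy's theorem to deform $C$ across the branch cuts of the dilogarithm until it passes through a chosen critical point $\underline{x}_{*}$ of $\Phi$ along a steepest descent path. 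Standard saddle-point asymptotics then yield
\[
\frac{2\pi \log\!\bigl|J_N(K,e^{2\pi\sqrt{-1}/N})\bigr|}{N}\;\longrightarrow\;2\pi\,\mathrm{Im}\,\Phi(\underline{x}_{*}).
\]

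The geometric heart of the argument is the identification $2\pi\,\mathrm{Im}\,\Phi(\underline{x}_{*})=\mathrm{Vol}(K)$. To establish it, I would show that the critical point equations $\partial\Phi/\partial x_i=0$ coincide with Thurston's gluing and completeness equations for an ideal triangulation of $S^{3}\setminus K$ (for example, an octahedral decomposition read off from $D$). Then by Yoshida's theorem relating the Neumann--Zagier potential at the geometric solution to $\sqrt{-1}\,\mathrm{Vol}(K)/(2\pi)$, modulo real periods that drop out under the absolute value, the limit above equals $\mathrm{Vol}(K)$.

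The hard part will be this last identification for an \emph{arbitrary} hyperbolic knot: one must verify (i) that the deformed contour selects precisely the geometric solution of the gluing equations, not a non-geometric companion, and (ii) that no competing critical point of equal real part cancels the leading contribution. Both points are where a general proof departs sharply from the case-by-case analyses available in the literature. I anticipate that the equivalence relation of the abstract serves exactly to reduce these two issues to a normal-form diagram, obtained from $D$ by moves which preserve the leading asymptotics of $\Phi_N$, on which (i) and (ii) can be checked uniformly. Controlling the Poisson tails and the polynomial prefactor is then routine and contributes only $O(\log N)$ to $\log|J_N|$, which vanishes after dividing by $N$.
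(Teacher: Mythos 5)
Your proposal is an outline of the standard potential-function strategy, but it is not a proof: the two points you flag yourself at the end, (i) that the deformed contour passes through the geometric solution of the gluing equations rather than a non-geometric one, and (ii) that no competing critical point with the same real part cancels the leading contribution, are exactly the open core of the volume conjecture, and deferring them to an unspecified ``normal-form diagram'' on which they ``can be checked uniformly'' is not an argument --- no such normal form is known, and producing one would itself amount to solving the problem. The earlier analytic steps are also not routine as claimed: at $q=\exp(2\pi\sqrt{-1}/N)$ the quantum factorials vanish at the endpoints of the summation range, the summand is not uniformly of the form $\exp(N\,\Phi_N(\underline{x}))$ times a polynomial prefactor, and the Poisson-summation and contour-deformation steps across the dilogarithm branch cuts require growth and non-cancellation estimates that you do not supply and that are precisely where the known case-by-case proofs (figure-eight, $5_2$, Whitehead doubles of torus knots) do all their work.

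For comparison, the paper does not prove this general statement either. What it actually argues is the twist-knot case: starting from Takata's explicit $(2p-1)$-fold sum, it squeezes $\frac{2\pi}{N}\log|J_N|$ between an upper bound obtained by replacing the sum by $N^{2p-1}$ times its largest term and a lower bound obtained from a single term together with $|J_N|\geq 1$, shows both bounds tend to ${\rm Im}[f(a_1,\ldots,a_{2p-1})]$ via elementary dilogarithm estimates, and quotes Cho--Murakami--Yokota to identify that quantity with ${\rm Vol}(K)$; no contour integral or saddle-point analysis appears. For arbitrary hyperbolic knots the paper offers only a conditional claim resting on the unproved assumption that the limit converges, combined with a density argument in an artificial topology on the set of hyperbolic knots, so the statement you were asked to prove remains a conjecture in the paper as well. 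Your route, if gaps (i) and (ii) could be closed, would be genuinely more general than the paper's; as written, both arguments fall short of the claim.
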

The purpose of this paper is to prove that the volume conjecture is true for every oriented twist knots.
Namely, we prove the following theorem.
\begin{thm}\label{97}
Let $K$ be a twist knot, $J_N(K,q)$ be the colored Jones polynomial of $K$, ${\rm Vol}(K)$ be the hyperbolic volume of $S^3 \setminus K$. Then the above equality (\ref{50}) holds.
\end{thm}

Let $K$ be a twist knot as in Figure \ref{1006}. 
Note that positive numbers correspond to left-handed twists and negative numbers correspond to right-handed twists respectively.
\begin{figure}[H]
 \begin{center}
  \includegraphics[clip,width=130pt]{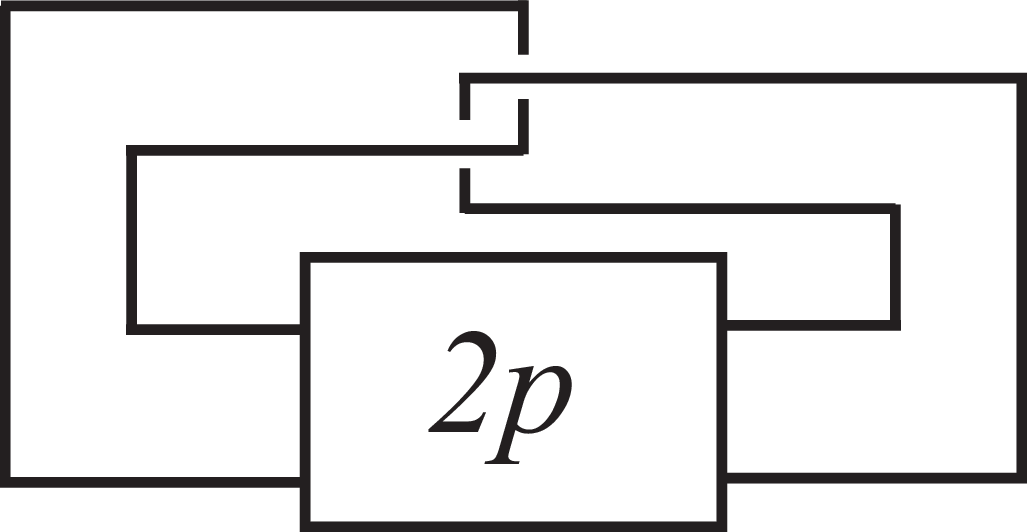}
 \end{center}
\caption{Here $2p \,  \, (p\in \Z \setminus \{0,1\})$ denote the numbers of half twists in each box.}
\label{1006}
\end{figure}

\section{Lemmas necessary for Proof of Volume Conjecture for twist knots}
Let $N$ be a positive integer, $n_i$ be positive integers or equal to $0$, and
$q$ be ${\rm exp}(2 \pi \sqrt{-1}/N)$.
We know the following theorem.
\begin{thm}[{\cite{11}}]\label{155}
Let $p$ be a positive integer, $K_{p>0}$ be the usual $p$-th twist knot, $J_N(K;q)$ be the colored Jones polynomial of $K$, and $K^{*}$ be the mirror image of the knot $K$. 
We know that $K_{-p}$ is the mirror image of $K_{p>0}$.
\begin{align*}
J_N(K^{*}_{p>0};q)=&q^{1-N}\sum_{N-1\geq n_{2p-1}\geq \cdots \geq n_1 \geq 0}(q^{1-N})_{n_{2p-1}}q^{-N n_{2p-1}}\\
&\times \prod_{i=1}^{2p-2}(-1)^{n_i} q^{(-1)^{i}N n_i+\binom{n_i}{2}
-n_in_{i+1}}{\begin{bmatrix}n_{i+1}\\n_i\end{bmatrix}},
\end{align*}
where we have used the usual $q$-binomial coefficient
$$
{\begin{bmatrix}n_{i+1}\\n_i\end{bmatrix}}=\frac{(q)_{n_{i+1}}}{(q)_{n_{i+1}-n_i}(q)_{n_{i}}}
$$
with the standard $q$-hyper geometric notation
$$
(q)_n=\prod_{k=0}^{n-1}(1-q^{k+1})=(1-q)^n\Gamma_q(n+1).
$$
\end{thm}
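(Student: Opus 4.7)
The plan is to compute $J_N(K^{*}_{p>0};q)$ directly from a standard diagram of the mirror twist knot via the skein-theoretic / $R$-matrix approach for $U_q(\mathfrak{sl}_2)$. First I would draw $K^{*}_p$ as a two-bridge knot with a clasp region (two crossings) joined to a region of $2p$ half-twists. Coloring each strand with the $N$-dimensional irreducible representation $V_N$ of $U_q(\mathfrak{sl}_2)$, the colored Jones polynomial becomes a state sum over admissible weight labelings of the internal edges of the diagram, one integer variable $n_i$ per twist.

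Next I would expand each crossing in the twist region using the explicit $R$-matrix action on $V_N \otimes V_N$. Each half-twist contributes a quadratic Gauss-type phase $q^{\binom{n_i}{2}}$ (from the diagonal part of $R$) together with a linear phase $q^{\pm N n_i}$ coming from framing and coupling to the top weight $N-1$. The alternation $(-1)^{n_i}$ paired with $q^{(-1)^{i}N n_i}$ reflects the fact that consecutive half-twists apply $R$ and $R^{-1}$ in turn. The $q$-binomial coefficients ${\begin{bmatrix}n_{i+1}\\n_i\end{bmatrix}}$ emerge from summing the off-diagonal nilpotent contributions of the $R$-matrix (equivalently from the structure constants of the coproduct), and the constraint $N-1 \ge n_{2p-1} \ge \cdots \ge n_1 \ge 0$ enforces that intermediate weights remain inside $V_N$.

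The clasp region is treated separately and should yield the prefactor $q^{1-N}(q^{1-N})_{n_{2p-1}} q^{-N n_{2p-1}}$ from the standard closure of a pair of crossings cabled with the $N$-colored Jones-Wenzl idempotent; this is exactly the contribution of the ``top'' part of a Habiro-type cyclotomic expansion. I would then verify the formula against known cases: for $p=1$ one has $K_1^{*}=4_1$, recovering Kashaev's figure-eight formula, and for $p=2$ one has $K_2^{*}=6_1$, matching Hikami's expression. Induction on $p$ confirms the general pattern.

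The main obstacle is bookkeeping. The precise framing, normalization, and mirror-image conventions must be fixed once and for all so that the prefactor $q^{1-N}$, the boundary phase $q^{-N n_{2p-1}}$, and the twist-region phases $q^{(-1)^{i}N n_i}$, $q^{\binom{n_i}{2}}$, $q^{-n_i n_{i+1}}$ and signs $(-1)^{n_i}$ all match the stated formula exactly. Any miscount of signs or $q$-powers in a single crossing compounds through the $2p-1$ nested sums and is difficult to detect without an inductive argument on $p$ together with careful checks at small $p$.
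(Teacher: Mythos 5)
The paper does not prove this statement at all: it is imported verbatim from Takata's paper (reference [11] in the bibliography), where it is established by a skein-theoretic computation for two-bridge knots, so there is no internal proof to compare yours against. Your outline does point at the standard route to such formulas --- an $R$-matrix/state-sum expansion over the twist region combined with a Habiro-type treatment of the clasp --- and that is essentially the family of techniques used in the cited literature. In that sense the strategy is not wrong.

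However, as a proof it has a genuine gap, and you have named it yourself: the ``bookkeeping'' you defer is the entire mathematical content of the statement. Every factor in the formula --- the sign $(-1)^{n_i}$, the alternating exponent $(-1)^{i}Nn_i$, the Gaussian $q^{\binom{n_i}{2}}$, the coupling $q^{-n_in_{i+1}}$, the prefactor $q^{1-N}(q^{1-N})_{n_{2p-1}}q^{-Nn_{2p-1}}$, and the chain condition $N-1\geq n_{2p-1}\geq\cdots\geq n_1\geq 0$ --- is asserted to ``emerge'' from the $R$-matrix without a single crossing actually being computed, and the proposed induction on $p$ is never set up (what is the inductive hypothesis relating the $(2p-1)$-fold sum to the $(2p-3)$-fold one when two half-twists are added?). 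The small-$p$ checks are also not carried out, and they require care with conventions: this paper's Figure 1 restricts to $p\in\Z\setminus\{0,1\}$, the indexing of twist knots by the number of half-twists varies between sources, and a mismatch between $K_p$ and its mirror $K^{*}_{p}$ silently flips $q\mapsto q^{-1}$ throughout. To turn your sketch into a proof you would need either to carry out the full $R$-matrix computation with fixed conventions, or simply to cite Takata [11] as the paper does.
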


\begin{defn}We say that $f(N)$ and $g(N)$ are {\it equivalent}
if the difference $|f(N)-g(N)|$ tends to zero when $N \to \infty$. Namely, we denote 
$\displaystyle \lim_{N\to \infty}|f(N)-g(N)|=0$ by $f(N)\overset{N}{\sim} g(N)$.
\end{defn}

This relation $\overset{N}{\sim}$ is an equivalence relation.
Hence, reflexive relation, symmetric relation, and transitive relation hold.

\begin{lem}\label{99}
If $i(N)\overset{N}{\sim} g(N)$, $h(N) \overset{N}{\sim} i(N)$, and $g(N)\leq f(N)\leq h(N)$,
then we have that
$$
f(N)\overset{N}{\sim} i(N).
$$
Moreover, if $i(N)$ is a constant function, then
$$
\lim_{N\to \infty}f(N)=i(N)
$$
holds.
\end{lem}
\begin{proof}
Because of the assumptions, we have that
$$
g(N)-i(N)\leq f(N)-i(N)\leq h(N)-i(N),
$$
and
$$
\lim_{N\to \infty}|g(N)-i(N)|=\lim_{N\to \infty}|h(N)-i(N)|=0.
$$
Hence,
$$
\lim_{N\to \infty}|f(N)-i(N)|=0,
$$
and
$$
f(N)\overset{N}{\sim} i(N).
$$
\end{proof}
%
We define the following dilogarithm function:
$$
{\rm Li}_2(z):=\sum_{n=1}^{\infty}\frac{z^n}{n^2} \, \, (|z|<1).
$$
We know that the following equality:
$$
-\log (1-z)=\sum_{n=1}^{\infty}\frac{z^n}{n} \, \, (|z|<1).
$$
Hence, the analytic continuation of the dilogarithm function is given by
$$
{\rm Li}_2(z)=-\int_{0}^{z} \frac{\log (1-t)}{t} dt \ \ (z\in \C \setminus [1, \infty)).
$$
Here, $\log (1-t):=\log |1-t|+\sqrt{-1}{\rm Arg}(1-t) \ \ (0\leq {\rm Arg}(1-t)<2 \pi)$ 
is a regular analytic function on $\C\setminus [1, \infty)$.
Hence, ${\rm Li}_2(z)$ is a regular analytic function on $\C\setminus [1, \infty)$.
\begin{defn}\label{91}
We define the following functions:
$$
f(z_1,\ldots,z_{2p-1}):=-{\rm Li}_2(\frac{1}{z_1})+\sum_{i=1}^{2p-2}\Bigl({\rm Li}_2(z_i)
-{\rm Li}_2(\frac{z_i}{z_{i+1}})+{\rm Li}_2(\frac{1}{z_{i+1}})\Bigr)+{\rm Li}_2(z_{2p-1}),
$$
$$
f(z^{(1)}_1,\ldots ,z^{(2p-1)}_{2p-1}):=\sum_{i=1}^{2p-2}\Bigl({\rm Li}_2(z^{(i)}_i)
-{\rm Li}_2(\frac{z^{(i)}_i}{z^{(i)}_{i+1}})+{\rm Li}_2(\frac{1}{z^{(i)}_{i+1}})\Bigr)-{\rm Li}_2(z^{(2p-1)}_{2p-1})
+\frac{\pi^2}{6},
$$
\begin{align*}
\tilde{f}(z^{(1)}_1,\ldots z^{(2p-1)}_{2p-1}):=&\sum_{i=1}^{2p-2}
\Bigl(-\sqrt{-1}{\rm Li}_2(z^{(i)}_i)+\frac{\pi^2 \sqrt{-1}}{6}
+\sqrt{-1}{\rm Li}_2(\frac{z^{(i)}_i}{z^{(i)}_{i+1}})-\frac{\pi^2 \sqrt{-1}}{6}\\
&-\sqrt{-1}{\rm Li}_2(\frac{1}{z^{(i)}_{i+1}})+\frac{\pi^2 \sqrt{-1}}{6}\Bigr)
+\sqrt{-1}{\rm Li}_2(z^{(2p-1)}_{2p-1})-\frac{\pi^2 \sqrt{-1}}{6},
\end{align*}
and
$$
g(n,N):=\frac{\pi^2 \sqrt{-1}n^2}{N^2}-\frac{2\pi n\log (1-\exp (2\pi \sqrt{-1} n/N))}{N}+\frac{\pi n \log 4}{N}
+\frac{\pi n \sin ^2(\frac{\pi n}{N})}{N},
$$
where $z^{(i)}_i:={\rm exp}(2 \pi \sqrt{-1} n_i/N), \, \, 1/z^{(i)}_{i+1}:={\rm exp}(2 \pi \sqrt{-1} (n_{i+1}-n_i)/N)$.
\end{defn}
Hence,
$$
\tilde{f}(z^{(1)}_1,\ldots z^{(2p-1)}_{2p-1})=-\sqrt{-1}f(z^{(1)}_1,\ldots z^{(2p-1)}_{2p-1})+\frac{\pi^2 \sqrt{-1}
(2p-2)}{6}.
$$

\subsection{First reduction}
Firstly, we prove that
$$
f(z^{(1)}_1,\ldots,z^{(2p-1)}_{2p-1})\overset{N}{\sim} f({\rm exp}(2 \pi \sqrt{-1} n_1/N),\ldots,{\rm exp}(2 \pi \sqrt{-1} n_{2p-1}/N)),
$$
$$
f(z^{(1)}_1,\ldots,z^{(2p-1)}_{2p-1})+{\rm Li}_2(z_{2p-1}^{(2p-1)})-\frac{\pi^2}{6}
\overset{N}{\sim} f({\rm exp}(2 \pi \sqrt{-1} n_1/N),\ldots,{\rm exp}(2 \pi \sqrt{-1} n_{2p-1}/N)),
$$
$$
\frac{2 \pi \log |(q)_n|}{N} \overset{N}{\sim}
\sqrt{-1}{\rm Li}_2({\rm exp}(2 \pi \sqrt{-1} n/N))-\frac{\pi^2\sqrt{-1}}{6}
\overset{N}{\sim} {\rm Im}[- {\rm Li}_2({\rm exp}(2 \pi \sqrt{-1} n/N))],
$$
and the following limit value converges:
$$
\lim_{N \to \infty} {\rm Li}_2({\rm exp}(2 \pi \sqrt{-1} n/N)).
$$
Hence, the following limit value converges:
$$
\lim_{N\to \infty}f({\rm exp}(2 \pi \sqrt{-1} n_1/N),\ldots,{\rm exp}(2 \pi \sqrt{-1} n_{2p-1}/N)).
$$
Moreover, we obtain the following formulas:
\begin{align*}
&\frac{2 \pi}{N}\log \Bigl(|(q)_{n_{2p-1}}|\prod_{i=1}^{2p-2} \Bigl|{\begin{bmatrix}n_{i+1}\\n_i\end{bmatrix}}
\Bigr|\Bigr)
= \frac{2 \pi}{N}\Bigl(\log |(q)_{n_{2p-1}}|+\sum_{i=1}^{2p-2} \log \Bigl|{\begin{bmatrix}n_{i+1}\\n_i\end{bmatrix}}\Bigr|\Bigr)\\
&\overset{N}{\sim}
\sum_{i=1}^{2p-2} {\rm Im}[{\rm Li}_2({\rm exp}(2 \pi \sqrt{-1} n_i/N))-{\rm Li}_2({\rm exp}(2 \pi \sqrt{-1} n_{i+1}/N))\\
& \ \ +{\rm Li}_2({\rm exp}(2 \pi \sqrt{-1} (n_{i+1}-n_i)/N))]
+{\rm Im}[- {\rm Li}_2({\rm exp}(2 \pi \sqrt{-1} n_{2p-1}/N))]\\
&={\rm Im}[f(z^{(1)}_1,\ldots ,z^{(2p-1)}_{2p-1})]\\ 
&\overset{N}{\sim} {\rm Im}[f({\rm exp}(2 \pi \sqrt{-1} n_1/N),\ldots ,{\rm exp}(2 \pi \sqrt{-1} n_{2p-1}/N))],
\end{align*}
and
\begin{align*}
&\frac{2 \pi}{N}\log \Bigl(|(q)_{n_{2p-1}}|\prod_{i=1}^{2p-2} \Bigl|{\begin{bmatrix}n_{i+1}\\n_i\end{bmatrix}}
\Bigr|\Bigr)
= \frac{2 \pi}{N}\Bigl(\log |(q)_{n_{2p-1}}|+\sum_{i=1}^{2p-2} \log \Bigl|{\begin{bmatrix}n_{i+1}\\n_i\end{bmatrix}}\Bigr|\Bigr)\\
&\overset{N}{\sim}\tilde{f}(z^{(1)}_1,\ldots ,z^{(2p-1)}_{2p-1})\\
&=-\sqrt{-1}f(z^{(1)}_1,\ldots ,z^{(2p-1)}_{2p-1})+\frac{\pi^2\sqrt{-1}(2p-2)}{6}\\
&\overset{N}{\sim}
-\sqrt{-1}f({\rm exp}(2 \pi \sqrt{-1} n_1/N),\ldots ,{\rm exp}(2 \pi \sqrt{-1} n_{2p-1}/N))
+\frac{\pi^2\sqrt{-1}(2p-2)}{6}.
\end{align*}
To show the above formulas, we first prove the property of dilogarithm function ${\rm Li}_2(z)$
of Lemma \ref{84}, Lemma \ref{85} and Lemma \ref{100}.

\begin{lem}\label{84}
If $z\in \C \setminus (-\infty,0]$, then we obtain the following functional equality:
$$
{\rm Li}_2(1-z)=-\frac{1}{2}\log ^2 z-{\rm Li}_2(1-\frac{1}{z}).
$$
\end{lem}
\begin{proof}
We obtain the following equalities:
\begin{align*}
{\rm Li}_2(1-z)+{\rm Li}_2(1-\frac{1}{z})&={\rm Li}_2(1-z)-\int_0^{1-1/z}\frac{1-t}{t}dt\\
&={\rm Li}_2(1-z)-\int_1^z\frac{\log (\frac{1}{t})}{1-\frac{1}{t}}\cdot \frac{dt}{t^2}\\
&={\rm Li}_2(1-z)-\int_1^z\frac{\log t}{t} dt-\int_1^z \frac{\log t}{t-1}dt\\
&={\rm Li}_2(1-z)-\Bigl[\frac{1}{2}\log ^2 t\Bigr]_1^z+\int_0^{1-z}\frac{1-t}{t} dt\\
&={\rm Li}_2(1-z)-\frac{1}{2}\log ^2 z-{\rm Li}_2(1-z)
=-\frac{1}{2}\log^2 z.
\end{align*}
\end{proof}
\begin{lem}\label{85}
If $z\in \C \setminus [0, \infty)$, then we obtain the following functional equality:
$$
-{\rm Li}_2(\frac{1}{z})={\rm Li}_2(z)+\frac{\pi^2}{6}+\frac{1}{2}\log^2 (-z).
$$
\end{lem}
\begin{proof}We obtain the following equalities:
\begin{align*}
{\rm Li}_2(z)+{\rm Li}_2(\frac{1}{z})&=\int_{z}^{0}\frac{\log (1-t)}{t} dt+\int_{1/z}^{0}\frac{\log (1-t)}{t} dt\\
&=\int_{z}^{-1}\frac{\log (1-t)}{t} dt+\int_{1/z}^{-1}\frac{\log (1-t)}{t} dt+2\int_{-1}^{0}\frac{\log (1-t)}{t} dt\\
&=2 {\rm Li}_2(-1)+\int_{z}^{-1}\frac{\log (1-t)}{t} dt+\int_{z}^{-1}t \log (\frac{t-1}{t}) \frac{-dt}{t^2}\\
&=-\frac{\pi^2}{6}+\int_{z}^{-1}\frac{\log (1-t)}{t}-\frac{\log (1-t)}{t} +\frac{\log (-t)}{t} dt\\
&=-\frac{\pi^2}{6}-\frac{1}{2}\log^2 (-z).
\end{align*}
\end{proof}
\begin{lem}\label{100}
If the domain of definition of ${\rm Li}_2(z)$ is $z\in \C \setminus [1,\infty)$
and that of  ${\rm Li}_2(-z)$ is $z\in \C \setminus (-\infty,-1]$, 
then we obtain the following functional equality:
$$
{\rm Li}_2(z)+{\rm Li}_2(-z)=\frac{1}{2} {\rm Li}_2(z^2).
$$
\end{lem}
\begin{proof}
We obtain the following equalities:
\begin{align*}
{\rm Li}_2(z)+{\rm Li}_2(-z)&=\sum_{n=1}^{\infty}\Bigl(\frac{z^n}{n^2}+\frac{(-z)^n}{n^2}\Bigr)\\
&=\sum_{m=1}^{\infty}\Bigl(\frac{z^{2m-1}}{(2m-1)^2}-\frac{z^{2m-1}}{(2m-1)^2}\Bigr)+\sum_{k=1}^{\infty}
\Bigl(\frac{z^{2k}}{(2k)^2}+\frac{z^{2k}}{(2k)^2}\Bigr)\\
&=\frac{1}{2}{\rm Li}_2(z^2).
\end{align*}
\end{proof}

\begin{lem}\label{98}
The following limit value uniformly converges: for every $n\in \C$,
$$
\lim_{N \to \infty} {\rm Li}_2({\rm exp}(2 \pi \sqrt{-1} n/N)).
$$
\end{lem}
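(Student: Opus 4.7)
The plan is to observe that $\exp(2\pi\sqrt{-1}n/N)\to 1$ as $N\to\infty$ for each fixed $n\in\Z$, and that ${\rm Li}_2$ admits a continuous extension to $z=1$ when approached from the closed unit disk; the limit is then simply ${\rm Li}_2(1)=\pi^2/6$. First, I fix $n\in\Z$ and set $z_N:=\exp(2\pi\sqrt{-1}n/N)$. Then $|z_N|=1$ for every $N$, $z_N\to 1$, and for $N>|n|$ we have $z_N\ne 1$, so $z_N\in\C\setminus[1,\infty)$ and ${\rm Li}_2(z_N)$ is well defined via the analytic continuation given in the excerpt. (The case $n=0$ is trivial since then $z_N=1$ for every $N$.)

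The key step is continuity of ${\rm Li}_2$ on $\overline{D}:=\{z\in\C:|z|\leq 1\}$. On $\overline{D}$ we have $|z^k/k^2|\leq 1/k^2$, and $\sum_{k=1}^\infty 1/k^2=\pi^2/6<\infty$, so the Weierstrass M-test shows that $\sum_{k=1}^\infty z^k/k^2$ converges uniformly on $\overline{D}$ and therefore defines a continuous function on $\overline{D}$. This function coincides with ${\rm Li}_2$ on the open disk $|z|<1$, and by continuity of both sides it agrees with the integral-based analytic continuation on $\overline{D}\setminus\{1\}$ as well. Consequently ${\rm Li}_2$ extends continuously to all of $\overline{D}$ with ${\rm Li}_2(1)=\pi^2/6$.

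Applying this continuity along the sequence $z_N\to 1$ in $\overline{D}$ yields
$$\lim_{N\to\infty}{\rm Li}_2\bigl(\exp(2\pi\sqrt{-1}n/N)\bigr)={\rm Li}_2(1)=\frac{\pi^2}{6}\in\C,$$
which in particular shows the limit converges. The only real subtlety is reconciling the branch-cut analytic continuation used to define ${\rm Li}_2$ in the excerpt with the power-series value at the boundary point $z=1$; this is handled by the uniform convergence of the series on $\overline{D}$ together with continuity of the integral definition on $\C\setminus[1,\infty)$, so no further analytic machinery is required.
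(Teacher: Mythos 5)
Your proof is correct, but it takes a genuinely different route from the paper's. The paper proves convergence by showing the sequence is Cauchy: it writes ${\rm Li}_2(e^{2\pi\sqrt{-1}n/N})-{\rm Li}_2(e^{2\pi\sqrt{-1}n/N'})$ as $\int \log(1-t)/t\,dt$ over a short arc of the unit circle shrinking to $1$ and estimates the integrand; the limit value is never identified, and the final estimate as printed (bounding $|\log(1-e^{\sqrt{-1}\theta})|$ by $1/\theta$, whose integral near $0$ diverges, and ending with the negative quantity $\log(N'/N)$ as an upper bound for an absolute value) needs repair, e.g.\ by using the integrable bound $C+|\log\theta|$ instead. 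You instead deduce everything from the uniform convergence of $\sum_{k\geq 1}z^k/k^2$ on the closed unit disk via the Weierstrass $M$-test: ${\rm Li}_2$ extends continuously to $\overline{D}$, the boundary values agree with the integral-based continuation on $\overline{D}\setminus\{1\}$ (a point you rightly check), and so ${\rm Li}_2(e^{2\pi\sqrt{-1}n/N})\to{\rm Li}_2(1)=\pi^2/6$. Your argument is more elementary --- no estimates near the logarithmic singularity are needed --- and strictly more informative, since it identifies the limit as $\pi^2/6$, a value the paper implicitly relies on later (the $\pi^2/6$ terms in the proof of Lemma \ref{103}). Both approaches establish the lemma; yours is the cleaner one here.
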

\begin{proof}
For sufficiently large $N$ and $N'$ such that $N>N'$, we obtain the following inequalities:
\begin{align*}
&|{\rm Li}_2({\rm exp}(2 \pi \sqrt{-1} n/N))-{\rm Li}_2({\rm exp}(2 \pi \sqrt{-1} n/N'))|\\
&=\Bigl|\int_{{\rm exp}(2 \pi \sqrt{-1} n/N')}^{{\rm exp}(2 \pi \sqrt{-1} n/N)}\frac{\log (1-t)}{t} dt\Bigr|\\
&\leq \int_{\frac{2 \pi n}{N}}^{\frac{2 \pi n}{N'}}\Bigl|\frac{\log(1-{\rm exp}(\sqrt{-1} \theta))}{{\rm exp}(\sqrt{-1} \theta)}\Bigr||\sqrt{-1}{\rm exp}(\sqrt{-1} \theta)|d\theta \\
&=\int_{\frac{2 \pi n}{N}}^{\frac{2 \pi n}{N'}}|\log(1-{\rm exp}(\sqrt{-1}\theta)|d\theta\\
&\leq \int_{\frac{2 \pi n}{N}}^{\frac{2 \pi n}{N'}} (1-\log |\theta|) d\theta
=\Bigl[2\theta -\theta \log |\theta| \Bigr]_{\frac{2 \pi n}{N}}^{\frac{2 \pi n}{N'}}\\
&=\frac{2\pi n}{N'}\Bigl(2-\log \Bigl|\frac{2\pi n}{N'}\Bigr|\Bigr)
-\frac{2\pi n}{N}\Bigl(2-\log \Bigl|\frac{2\pi n}{N}\Bigl|\Bigr)
 \to 0 \ \ (N>N'\to \infty).
\end{align*}
\end{proof}

\begin{lem}\label{101}
With the same notation as in Theorem \ref{155} we have that
\begin{align*}
\frac{2 \pi \log |(q)_n|}{N}&=
\sqrt{-1}{\rm Li}_2({\rm exp}(2 \pi \sqrt{-1} n/N))-\frac{\pi^2\sqrt{-1}}{6}+g(n,N)\\
&\overset{N}{\sim}{\rm Im}[- {\rm Li}_2({\rm exp}(2 \pi \sqrt{-1} n/N))].
\end{align*}
\end{lem}
\begin{proof}
Because of Lemma \ref{84} and Lemma \ref{85},
we obtain the following equalities:
\begin{align*}
&\sqrt{-1}{\rm Li}_2({\rm exp}(2 \pi \sqrt{-1}n/N))-\frac{\pi^2 \sqrt{-1}}{6}
-{\rm Im}[- {\rm Li}_2({\rm exp}(2 \pi \sqrt{-1} n/N))]\\
&=\sqrt{-1}({\rm Re}[{\rm Li}_2({\rm exp}(2 \pi \sqrt{-1}n/N))]
+\sqrt{-1}{\rm Im}[{\rm Li}_2({\rm exp}(2 \pi \sqrt{-1}n/N))])\\
& \ \ -\frac{\pi^2 \sqrt{-1}}{6}
+{\rm Im}[ {\rm Li}_2({\rm exp}(2 \pi \sqrt{-1} n/N))]\\
&=\sqrt{-1}({\rm Re}[{\rm Li}_2({\rm exp}(2 \pi \sqrt{-1}n/N))]-\frac{\pi^2}{6})\\
&=\sqrt{-1}({\rm Re}[-\frac{1}{2}\log ^2 (1-\exp (2 \pi \sqrt{-1}n/N))\\
& \ \ -{\rm Li}_2\Bigl(1-\frac{1}{1-\exp (2\pi \sqrt{-1}n/N)}\Bigr)]-\frac{\pi^2}{6})\\
&=\sqrt{-1}({\rm Re}[-\frac{1}{2}\log ^2 (1-\exp (2 \pi \sqrt{-1}n/N))\\
& \ \ -{\rm Li}_2\Bigl(\frac{\exp (2\pi \sqrt{-1}n/N)}{\exp (2\pi \sqrt{-1}n/N)-1}\Bigr)]-\frac{\pi^2}{6})\\
&=\sqrt{-1}({\rm Re}[-\frac{1}{2}\log ^2 (1-\exp (2 \pi \sqrt{-1}n/N))+
{\rm Li}_2\Bigl(\frac{\exp (2\pi \sqrt{-1}n/N)-1}{\exp (2\pi \sqrt{-1}n/N)}\Bigr)\\
& \ \ +\frac{\pi^2}{6}
+\frac{1}{2}\log ^2 \Bigl(-\frac{\exp (2\pi \sqrt{-1}n/N)}{\exp (2\pi \sqrt{-1}n/N)-1}\Bigr)]-\frac{\pi^2}{6})\\
&=\sqrt{-1}({\rm Re}[{\rm Li}_2\Bigl(\frac{\exp (2\pi \sqrt{-1}n/N)-1}{\exp (2\pi \sqrt{-1}n/N)}\Bigr)\\
& \ \ -\log (\exp (2\pi \sqrt{-1}n/N))\log(1-\exp (2\pi \sqrt{-1}n/N))\\
& \ \ +\frac{1}{2}\log ^2 (\exp (2\pi \sqrt{-1}n/N))]) \to 0 \ \ (N\to \infty).
\end{align*}
Hence,
\begin{align*}
\frac{2 \pi \log |(q)_n|}{N}&=\frac{2 \pi \log |\prod_{k=0}^{n-1}(1-q^{k+1})|}{N}
=\frac{2 \pi \sum_{k=1}^{n} \log |1-q^k|}{N}\\
&\overset{N}{\sim} 2 \pi \int_0^{n/N} \log |1-{\rm exp}(2 \pi \sqrt{-1} x)| dx \\
&=\sqrt{-1}{\rm Li}_2({\rm exp}(2 \pi \sqrt{-1}n/N))+\frac{\pi^2 \sqrt{-1} n^2}{N^2}\\
& \ \ -\frac{2 \pi n \log (1-{\rm exp}(2 \pi \sqrt{-1} n/N))}{N}\\
& \ \ +\frac{\pi n \log 4}{N}
+\frac{\pi n \log \sin ^2(\frac{\pi n}{N})}{N}-\frac{\pi^2 \sqrt{-1}}{6}\\  
&= \sqrt{-1}{\rm Li}_2({\rm exp}(2 \pi \sqrt{-1}n/N))-\frac{\pi^2\sqrt{-1}}{6}+g(n,N)\\
&\overset{N}{\sim} {\rm Im}[- {\rm Li}_2({\rm exp}(2 \pi \sqrt{-1} n/N))].
\end{align*}
\end{proof}
\begin{cor}\label{102}
With the same notation as in Theorem \ref{155} we have that
\begin{align*}
\frac{2 \pi \log \Bigl|{\begin{bmatrix}n_{i+1}\\n_i\end{bmatrix}}\Bigr|}{N}
&\overset{N}{\sim}
-\sqrt{-1}{\rm Li}_2({\rm exp}(2 \pi \sqrt{-1} n_i/N))+\frac{\pi^2\sqrt{-1}}{6}-g(n_i,N)\\
& \ \ +\sqrt{-1}{\rm Li}_2({\rm exp}(2 \pi \sqrt{-1} n_{i+1}/N))-\frac{\pi^2\sqrt{-1}}{6}+g(n_{i+1},N)\\
& \ \ -\sqrt{-1}{\rm Li}_2({\rm exp}(2 \pi \sqrt{-1} (n_{i+1}-n_i)/N))+\frac{\pi^2 \sqrt{-1}}{6}\\
& \ \ -g(n_{i+1}-n_i,N)\\
&\overset{N}{\sim}
{\rm Im}[{\rm Li}_2({\rm exp}(2 \pi \sqrt{-1} n_i/N))-{\rm Li}_2({\rm exp}(2 \pi \sqrt{-1} n_{i+1}/N))\\
& \ \ +{\rm Li}_2({\rm exp}(2 \pi \sqrt{-1} (n_{i+1}-n_i)/N))].
\end{align*}
\end{cor}

\begin{lem}\label{103}
$f(z^{(1)}_1,\ldots,z^{(2p-1)}_{2p-1})$ and $f(z_1,\ldots,z_{2p-1})$ in Definition \ref{91} above. 
We obtain the following equivalence relations:
$$
f(z^{(1)}_1,\ldots,z^{(2p-1)}_{2p-1})
\overset{N}{\sim} f({\rm exp}(2 \pi \sqrt{-1} n_1/N),\ldots,{\rm exp}(2 \pi \sqrt{-1} n_{2p-1}/N)),
$$
and
$$
f(z^{(1)}_1,\ldots,z^{(2p-1)}_{2p-1})+{\rm Li}_2(z_{2p-1}^{(2p-1)})-\frac{\pi^2}{6}
\overset{N}{\sim} f({\rm exp}(2 \pi \sqrt{-1} n_1/N),\ldots,{\rm exp}(2 \pi \sqrt{-1} n_{2p-1}/N)).
$$
\end{lem}
\begin{proof}
Firstly, for a sufficiently large $N$,
\begin{align*}
&\Bigl| -{\rm Li}_2(\frac{z^{(i)}_i}{z^{(i)}_{i+1}})
+{\rm Li}_2(\frac{{\rm exp}(2\pi \sqrt{-1}n_i/N)}{{\rm exp}(2\pi \sqrt{-1}n_{i+1}/N)})\Bigr|\\
&=|-{\rm Li}_2({\rm exp}(2 \pi \sqrt{-1}n_{i+1})/N))
+{\rm Li}_2({\rm exp}(2 \pi \sqrt{-1}(n_i-n_{i+1})/N))|\\
&\leq \int_{\frac{2\pi (n_i-n_{i+1})}{N}}^{\frac{2 \pi n_{i+1}}{N}}|\log (1-{\rm exp}(\sqrt{-1}\theta))|
d\theta \\
&\leq \int_{\frac{2\pi (n_i-n_{i+1})}{N}}^{\frac{2 \pi n_{i+1}}{N}} (1-\log |\theta |) d \theta
= \Bigl[2\theta-\theta \log |\theta |\Bigr]_{\frac{2\pi (n_i-n_{i+1})}{N}}^{\frac{2 \pi n_{i+1}}{N}}\\
&=\frac{2 \pi n_{i+1}}{N}(2-\log \frac{2\pi n_{i+1}}{N})
-\frac{2 \pi (n_i-n_{i+1})}{N}(2-\log \frac{2\pi (n_{i+1}-n_i)}{N})\\
&\to 0 \ \ (N\to \infty).
\end{align*}
Secondly, for a sufficiently large $N$,
\begin{align*}
&\Bigl| {\rm Li}_2(\frac{1}{z^{(i)}_{i+1}})-{\rm Li}_2(\frac{1}{{\rm exp}(2 \pi \sqrt{-1} n_i/N)})\Bigr|\\
&=|{\rm Li}_2({\rm exp}(2 \pi \sqrt{-1}n_{i+1}/N))-{\rm Li}_2({\rm exp}(-2 \pi \sqrt{-1} n_{i+1}/N)|\\
&\leq \int_{\frac{-2\pi n_{i+1}}{N}}^{\frac{2 \pi n_{i+1}}{N}}|\log (1-{\rm exp}(\sqrt{-1}\theta))|
d\theta\\
&\leq \int_{\frac{-2\pi n_{i+1}}{N}}^{\frac{2 \pi n_{i+1}}{N}}(1-\log |\theta |) d \theta
=\Bigl[2\theta-\theta \log |\theta |\Bigr]_{\frac{-2\pi n_{i+1}}{N}}^{\frac{2 \pi n_{i+1}}{N}}\\
&=\frac{2 \pi n_{i+1}}{N}(2-\log \frac{2\pi n_{i+1}}{N})
+\frac{2 \pi n_{i+1}}{N}(2-\log \frac{2\pi n_{i+1}}{N})\\
&\to 0 \ \ (N\to \infty).
\end{align*}
Thirdly, for a sufficiently large $N$, because of Lemma \ref{100},
\begin{align*}
&\Bigl|-{\rm Li}_2(z^{(2p-1)}_{2p-1})-{\rm Li}_2({\rm exp}(2 \pi \sqrt{-1}n_{2p-1}/N))
+{\rm Li}_2(\frac{1}{{\rm exp}(2 \pi \sqrt{-1} n_1/N)})+\frac{\pi^2}{6}\Bigr|\\
&=\Bigl|2{\rm Li}_2({\rm exp}(2 \pi \sqrt{-1}n_{2p-1}/N))-{\rm Li}_2({\rm exp}(-2\pi \sqrt{-1}n_1/N))-\frac{\pi^2}{6}\Bigr|\\
&=\Bigl|{\rm Li}_2({\rm exp}(4 \pi \sqrt{-1}n_{2p-1}/N))-2{\rm Li}_2(-{\rm exp}(2\pi \sqrt{-1}n_{2p-1}/N))\\
& \ \ -{\rm Li}_2({\rm exp}(-2\pi \sqrt{-1}n_1/N))-\frac{\pi^2}{6}\Bigr|\\
&\leq \int_{\frac{-2\pi n_1}{N}}^{\frac{4 \pi n_{2p-1}}{N}}|\log (1-{\rm exp}(\sqrt{-1}\theta))| d\theta
+|-2{\rm Li}_2(-{\rm exp}(2 \pi \sqrt{-1}n_{2p-1}/N))-\frac{\pi^2}{6}|\\
&\leq \int_{\frac{-2\pi n_1}{N}}^{\frac{4 \pi n_{2p-1}}{N}}(1-\log |\theta |) d \theta
+|-2{\rm Li}_2(-{\rm exp}(2 \pi \sqrt{-1}n_{2p-1}/N))-\frac{\pi^2}{6}|\\
&=\Bigl[2\theta-\theta \log |\theta |\Bigr]_{\frac{-2\pi n_1}{N}}^{\frac{4 \pi n_{2p-1}}{N}}
+|-2{\rm Li}_2(-{\rm exp}(2 \pi \sqrt{-1}n_{2p-1}/N))-\frac{\pi^2}{6}|\\
&=\frac{4 \pi n_{2p-1}}{N}(2-\log \frac{4\pi n_{2p-1}}{N})
+\frac{2 \pi n_1}{N}(2-\log \frac{2\pi n_{1}}{N})\\
& \ \ +|-2{\rm Li}_2(-{\rm exp}(2 \pi \sqrt{-1}n_{2p-1}/N))-\frac{\pi^2}{6}|\\
&\to 0 \ \ (N\to \infty).
\end{align*}
Fourthly, for a sufficiently large $N$,
\begin{align*}
&\Bigl|-{\rm Li}_2({\rm exp}(2 \pi \sqrt{-1}n_{2p-1}/N))
+{\rm Li}_2(\frac{1}{{\rm exp}(2 \pi \sqrt{-1} n_1/N)})\Bigr|\\
&=\Bigl|{\rm Li}_2({\rm exp}(2 \pi \sqrt{-1}n_{2p-1}/N))-{\rm Li}_2({\rm exp}(-2\pi \sqrt{-1}n_1/N))\Bigr|\\
&\leq \Bigl|\int_{\frac{-2\pi n_1}{N}}^{\frac{2 \pi n_{2p-1}}{N}}\log (1-{\rm exp}(\sqrt{-1}\theta)) d\theta
\Bigr|
\leq \int_{\frac{-2\pi n_1}{N}}^{\frac{2 \pi n_{2p-1}}{N}}|\log (1-{\rm exp}(\sqrt{-1}\theta))| d\theta \\
&\leq \int_{\frac{-2\pi n_1}{N}}^{\frac{2 \pi n_{2p-1}}{N}}(1-\log |\theta |) d \theta 
=\Bigl[2\theta-\theta \log |\theta |\Bigr]_{\frac{-2\pi n_1}{N}}^{\frac{2 \pi n_{2p-1}}{N}}\\
&=\frac{2 \pi n_{2p-1}}{N}(2-\log \frac{2\pi n_{2p-1}}{N})
+\frac{2 \pi n_1}{N}(2-\log \frac{2\pi n_{1}}{N})\\
&\to 0 \ \ (N\to \infty).
\end{align*}
Hence,
\begin{align*}
&|f(z^{(1)}_1,\ldots,z^{(2p-1)}_{2p-1})-f({\rm exp}(2 \pi \sqrt{-1} n_1/N),\ldots,{\rm exp}(2 \pi \sqrt{-1} n_{2p-1}/N))|\\
&=\Bigl|\sum_{i=1}^{2p-2}\Bigl({\rm Li}_2(z^{(i)}_i)-{\rm Li}_2({\rm exp}(2 \pi \sqrt{-1} n_i/N))
-{\rm Li}_2(\frac{z^{(i)}_i}{z^{(i)}_{i+1}})\\
& \ \ +{\rm Li}_2(\frac{{\rm exp}(2 \pi \sqrt{-1} n_i/N)}
{{\rm exp}(2 \pi \sqrt{-1} n_{i+1}/N)})
+{\rm Li}_2(\frac{1}{z^{(i)}_{i+1}})-{\rm Li}_2(\frac{1}{{\rm exp}(2 \pi \sqrt{-1} n_i/N)})\Bigr)\\
& \ \ -{\rm Li}_2(z^{(2p-1)}_{2p-1})-{\rm Li}_2({\rm exp}(2 \pi \sqrt{-1} n_{2p-1}/N))
+{\rm Li}_2(\frac{1}{{\rm exp}(2 \pi \sqrt{-1} n_1/N)})+\frac{\pi^2}{6}\Bigr|\\
&\leq \sum_{i=1}^{2p-2}\Bigl(|{\rm Li}_2(z^{(i)}_i)-{\rm Li}_2({\rm exp}(2 \pi \sqrt{-1} n_i/N))|\\
& \ \ +|-{\rm Li}_2(\frac{z^{(i)}_i}{z^{(i)}_{i+1}})+{\rm Li}_2(\frac{{\rm exp}(2 \pi \sqrt{-1} n_i/N)}
{{\rm exp}(2 \pi \sqrt{-1} n_{i+1}/N)})|\\
& \ \ +|{\rm Li}_2(\frac{1}{z^{(i)}_{i+1}})-{\rm Li}_2(\frac{1}{{\rm exp}(2 \pi \sqrt{-1} n_i/N)})|\Bigr)\\
& \ \ +|-{\rm Li}_2(z^{(2p-1)}_{2p-1})-{\rm Li}_2({\rm exp}(2 \pi \sqrt{-1} n_{2p-1}/N))\\
& \ \ +{\rm Li}_2(\frac{1}{{\rm exp}(2 \pi \sqrt{-1} n_1/N)})+\frac{\pi^2}{6}| \to 0 \, \, (N\to \infty),
\end{align*}
and
\begin{align*}
&|f(z^{(1)}_1,\ldots,z^{(2p-1)}_{2p-1})+{\rm Li}_2(z^{(2p-1)}_{2p-1})-\frac{\pi^2}{6}\\
& \ \ -f({\rm exp}(2 \pi \sqrt{-1} n_1/N),\ldots,{\rm exp}(2 \pi \sqrt{-1} n_{2p-1}/N))|\\
&=\Bigl|\sum_{i=1}^{2p-2}\Bigl({\rm Li}_2(z^{(i)}_i)-{\rm Li}_2({\rm exp}(2 \pi \sqrt{-1} n_i/N))
-{\rm Li}_2(\frac{z^{(i)}_i}{z^{(i)}_{i+1}})\\
& \ \ +{\rm Li}_2(\frac{{\rm exp}(2 \pi \sqrt{-1} n_i/N)}
{{\rm exp}(2 \pi \sqrt{-1} n_{i+1}/N)})
+{\rm Li}_2(\frac{1}{z^{(i)}_{i+1}})-{\rm Li}_2(\frac{1}{{\rm exp}(2 \pi \sqrt{-1} n_i/N)})\Bigr)\\
& \ \ -{\rm Li}_2({\rm exp}(2 \pi \sqrt{-1} n_{2p-1}/N))
+{\rm Li}_2(\frac{1}{{\rm exp}(2 \pi \sqrt{-1} n_1/N)})\Bigr|\\
&\leq \sum_{i=1}^{2p-2}\Bigl(|{\rm Li}_2(z^{(i)}_i)-{\rm Li}_2({\rm exp}(2 \pi \sqrt{-1} n_i/N))|\\
& \ \ +|-{\rm Li}_2(\frac{z^{(i)}_i}{z^{(i)}_{i+1}})+{\rm Li}_2(\frac{{\rm exp}(2 \pi \sqrt{-1} n_i/N)}
{{\rm exp}(2 \pi \sqrt{-1} n_{i+1}/N)})|\\
& \ \ +|{\rm Li}_2(\frac{1}{z^{(i)}_{i+1}})-{\rm Li}_2(\frac{1}{{\rm exp}(2 \pi \sqrt{-1} n_i/N)})|\Bigr)\\
& \ \ 
+|-{\rm Li}_2({\rm exp}(2 \pi \sqrt{-1} n_{2p-1}/N))+{\rm Li}_2(\frac{1}{{\rm exp}(2 \pi \sqrt{-1} n_1/N)})|\\
&\to 0 \, \, (N\to \infty).
\end{align*}
\end{proof}

\subsection{Second reduction}

Let $(a_1,\ldots,a_{2p-1})\in (\C \setminus [0,\infty))^{2p-1}$ be a solution of $\partial f/\partial z_1 =\cdots =\partial f/\partial z_{2p-1}=0$. The solution of these equations which induces the complete hyperbolic
structure of the knot complement exists uniquely by virtue of the Lemma $2.3$ of \cite{17}.
Secondly, we prove the following equality by using the method of steepest descent:
\begin{align*}
&\int_{0}^N\cdots \int_{0}^N
\exp \Bigl(\frac{\sqrt{-1}N}{2\pi}(f(\exp (2\pi\sqrt{-1}n_1/N),\ldots ,\exp (2\pi \sqrt{-1}n_{2p-1}/N))\\
& \ \ -\frac{\pi^2(2p-2)}{6})\Bigr)dn_1\cdots dn_{2p-1}\\
&=
\exp \Bigl(-\frac{3\pi\sqrt{-1} (2p-1)}{4}\Bigr) \cdot N^{\frac{2p-1}{2}}
\cdot \exp \Bigl(-\frac{N}{2\pi}{\rm Im}[f(a_1,\ldots ,a_{2p-1})]\Bigr)\\
& \ \ \times \Bigl(\frac{1}{a_1}\cdots\frac{1}{a_{2p-1}}+O(N^{-1})\Bigr)\\
& \ \ \times \prod_{j=1}^{2p-1}(-\xi_j)^{-\frac{1}{2}}
\cdot \exp \Bigl(\frac{\sqrt{-1}N}{2\pi}({\rm Re}[f(a_1,\ldots,a_{2p-1})]-\frac{\pi^2 (2p-2)}{6})\Bigr).
\end{align*}
As for symbol $\xi_j$, see the Lemma \ref{122}.
We know the following equality:
$$
{\rm Li}_2(\exp (2\pi \sqrt{-1} z))+{\rm Li}_2(\exp (-2\pi \sqrt{-1}z))
=-4\pi ^2 \zeta (-1,z) \ \ ({\rm Im}[z]\geq 0).
$$
Here, $\zeta (a,z)$ is the Hurwitz zeta function. Hence, domain of ${\rm Li}_2(z)$ is $\C$.

${\rm Im}[f(a_1,\ldots,a_{2p-1})]$ is the hyperbolic volume of a twist knot \cite{1}.

We prove the following Lemma in this section:
\begin{lem}\label{122}
\begin{align*}
&\int_{0}^N\cdots \int_{0}^N
\exp \Bigl(\frac{\sqrt{-1}N}{2\pi}(f(\exp (2\pi\sqrt{-1}n_1/N),\ldots ,\exp (2\pi \sqrt{-1}n_{2p-1}/N))\\
& \ \ -\frac{\pi^2(2p-2)}{6})\Bigr)dn_1\cdots dn_{2p-1}\\
&=
\exp \Bigl(-\frac{3\pi\sqrt{-1} (2p-1)}{4}\Bigr) \cdot N^{\frac{2p-1}{2}}
\cdot \exp \Bigl(-\frac{N}{2\pi}{\rm Im}[f(a_1,\ldots ,a_{2p-1})]\Bigr)\\
& \ \ \times \Bigl(\frac{1}{a_1}\cdots\frac{1}{a_{2p-1}}+O(N^{-1})\Bigr)\\
& \ \ \times \prod_{j=1}^{2p-1}(-\xi_j)^{-\frac{1}{2}}
\cdot \exp \Bigl(\frac{\sqrt{-1}N}{2\pi}({\rm Re}[f(a_1,\ldots,a_{2p-1})]-\frac{\pi^2 (2p-2)}{6})\Bigr),
\end{align*}
where $\xi_j$ are eigenvalues of the Hessian matrix of $f(z_1,\ldots ,z_{2p-1})$
evaluated at $(a_1,\ldots,a_{2p-1})$ and $(-\xi_j)^{-\frac{1}{2}}$ are defined with arguments:
$$
|{\rm Arg}\sqrt{-\xi_j}|<\pi.
$$
\end{lem}
\begin{proof}
We regard $\exp(2\pi\sqrt{-1} n_i/N)$ as $z_i$. We obtain the following equality:
$$
\exp (\frac{2\pi\sqrt{-1} n_i}{N})\frac{2\pi\sqrt{-1}}{N}dn_i=dz_i.
$$
Hence,
\begin{align*}
&\int_{0}^N
\cdots \int_{0}^N
\exp \Bigl(\frac{\sqrt{-1}N}{2\pi}(f(\exp(2\pi \sqrt{-1}n_1/N),\ldots ,\exp(2\pi \sqrt{-1}n_{2p-1}/N))\\
& \ \ -\frac{\pi^2(2p-2)}{6})\Bigr)dn_1\cdots dn_{2p-1}\\
&=\int_{|z_{2p-1}|=1}\cdots \int_{|z_1|=1}\Bigl(\frac{N}{2\pi \sqrt{-1}}\Bigr)^{2p-1}
\exp \Bigl(\frac{\sqrt{-1}N}{2\pi}(f(z_1,\ldots ,z_{2p-1})-\frac{\pi^2(2p-2)}{6})\Bigr)\\
& \ \ \times\frac{1}{z_1}\cdots \frac{1}{z_{2p-1}}
dz_1\cdots dz_{2p-1}.
\end{align*}
We deform $|z_i|=1$ into $C_i$ so that $a_i$ will be a maximal value ${\rm Im}[f(a_1,\ldots,a_{2p-1})]$.
Because of the method of steepest descent, we obtain the following formulas:
\begin{align*}
&\int_{|z_{2p-1}|=1}\cdots \int_{|z_1|=1}\Bigl(\frac{N}{2\pi \sqrt{-1}}\Bigr)^{2p-1}
\exp \Bigl(\frac{\sqrt{-1}N}{2\pi}(f(z_1,\ldots ,z_{2p-1})
- \frac{\pi^2 (2p-2)}{6})\Bigr)\\
& \ \ \times \frac{1}{z_1}\cdots \frac{1}{z_{2p-1}}dz_1\cdots dz_{2p-1}\\
&=\int_{C_{2p-1}}\cdots \int_{C_1}\Bigl(\frac{N}{2\pi \sqrt{-1}}\Bigr)^{2p-1}
\exp \Bigl(\frac{\sqrt{-1}N}{2\pi}(f(z_1,\ldots ,z_{2p-1})-\frac{\pi^2 (2p-2)}{6})\Bigr)\\
& \ \ \times \frac{1}{z_1}\cdots \frac{1}{z_{2p-1}}dz_1\cdots dz_{2p-1}\\
&= \Bigl(\frac{N}{2\pi \sqrt{-1}}\Bigr)^{2p-1}
\Bigl(\frac{4\pi^2}{\sqrt{-1}N}\Bigr)^{\frac{2p-1}{2}}
\exp \Bigl(\frac{\sqrt{-1}N}{2\pi}(f(a_1,\ldots,a_{2p-1})-\frac{\pi^2(2p-2)}{6})\Bigr)\\
& \ \ \times \Bigl(\frac{1}{a_1}\cdots\frac{1}{a_{2p-1}}+O(N^{-1})\Bigr)
\prod_{j=1}^{2p-1}(-\xi_j)^{-\frac{1}{2}}\\
&=\exp \Bigl(-\frac{3\pi \sqrt{-1} (2p-1)}{4}\Bigr) \cdot N^{\frac{2p-1}{2}}
\cdot \exp \Bigl(-\frac{N}{2\pi}{\rm Im}[f(a_1,\ldots ,a_{2p-1})]\Bigr)\\
& \ \ \times \Bigl(\frac{1}{a_1}\cdots\frac{1}{a_{2p-1}}+O(N^{-1})\Bigr)\\
& \ \ \times \prod_{j=1}^{2p-1}(-\xi_j)^{-\frac{1}{2}}
\cdot \exp \Bigl(\frac{\sqrt{-1}N}{2\pi}({\rm Re}[f(a_1,\ldots,a_{2p-1})]-\frac{\pi^2 (2p-2)}{6})\Bigr).
\end{align*}
Therefore, we obtain the claim.
\end{proof}

\begin{cor}\label{123}With the same notation as in Lemma \ref{122} we have that
\begin{align*}
&\int_{0}^N\cdots \int_{0}^N
\exp \Bigl(\frac{-\sqrt{-1}N}{2\pi}\\
& \ \ \times f(\exp (2\pi\sqrt{-1}n_1/N),\ldots ,\exp (2\pi \sqrt{-1}n_{2p-1}/N))
\Bigr) dn_1\cdots dn_{2p-1}\\
&=
\exp \Bigl(-\frac{\pi \sqrt{-1} (2p-1)}{4}\Bigr) \cdot N^{\frac{2p-1}{2}}
\cdot \exp \Bigl(\frac{N}{2\pi}{\rm Im}[f(a_1,\ldots ,a_{2p-1})]\Bigr)\\
& \ \ \times \Bigl(\frac{1}{a_1}\cdots\frac{1}{a_{2p-1}}+O(N^{-1})\Bigr)\\
& \ \ \times \prod_{j=1}^{2p-1}(-\xi_j)^{-\frac{1}{2}}
\cdot \exp \Bigl(\frac{-\sqrt{-1}N}{2\pi}{\rm Re}[f(a_1,\ldots,a_{2p-1})]\Bigr).
\end{align*}
\end{cor}

\section{Proof of the volume conjecture for twist knots}
In this section, we prove Volume Conjecture for twist knots by using Lemmas of the previous Section.

\begin{lem}\label{106}
With the same notation as in Theorem \ref{155}. If
$$
\frac{2\pi\log |\psi(N)|}{N}\to 0 \ \ (N\to \infty),
$$
then we have that
\begin{align*}
&\frac{2 \pi}{N}\log \Bigl(|\psi(N)|
|(q)_{n_{2p-1}}|\prod_{i=1}^{2p-2} \Bigl|{\begin{bmatrix}n_{i+1}\\n_i\end{bmatrix}}\Bigr|\Bigr)\\
&\overset{N}{\sim} {\rm Im}[f({\rm exp}(2 \pi \sqrt{-1} n_1/N),\ldots ,{\rm exp}(2 \pi \sqrt{-1} n_{2p-1}/N))]\\
&\overset{N}{\sim} -\sqrt{-1}f({\rm exp}(2 \pi \sqrt{-1} n_1/N),\ldots ,{\rm exp}(2 \pi \sqrt{-1} n_{2p-1}/N))
+\frac{\pi^2\sqrt{-1}(2p-2)}{6}\\
& \ \ +\sum_{i=1}^{2p-2}(-g(n_i,N)+g(n_{i+1},N)-g(n_{i+1}-n_i,N)).
\end{align*}
\end{lem}
\begin{proof}We obtain the following formulas:
\begin{align}\nonumber
&\frac{2 \pi}{N}\log \Bigl(|\psi(N)||(q)_{n_{2p-1}}|\prod_{i=1}^{2p-2} \Bigl|{\begin{bmatrix}n_{i+1}\\n_i\end{bmatrix}}
\Bigr|\Bigr)\\ \nonumber
&= \frac{2 \pi}{N}\Bigl(\log |\psi(N)|+\log |(q)_{n_{2p-1}}|+\sum_{i=1}^{2p-2} \log \Bigl|{\begin{bmatrix}n_{i+1}\\n_i\end{bmatrix}}\Bigr|\Bigr)\\ \nonumber
&\overset{N}{\sim} \frac{2 \pi}{N}\Bigl(\log |(q)_{n_{2p-1}}|+\sum_{i=1}^{2p-2} \log \Bigl|{\begin{bmatrix}n_{i+1}\\n_i\end{bmatrix}}\Bigr|\Bigr)\\ \nonumber
&\overset{N}{\sim} \sum_{i=1}^{2p-2} {\rm Im}[{\rm Li}_2({\rm exp}(2 \pi \sqrt{-1} n_i/N))-{\rm Li}_2({\rm exp}(2 \pi \sqrt{-1} n_{i+1}/N))\\ \nonumber
& \ \ +{\rm Li}_2({\rm exp}(2 \pi \sqrt{-1} (n_{i+1}-n_i)/N))]\\ \label{107}
& \ \ +{\rm Im}[- {\rm Li}_2({\rm exp}(2 \pi \sqrt{-1} n_{2p-1}/N))]\\ \nonumber
&={\rm Im}[f(z^{(1)}_1,\ldots ,z^{(2p-1)}_{2p-1})]\\ \label{108}
&\overset{N}{\sim} {\rm Im}[f({\rm exp}(2 \pi \sqrt{-1} n_1/N),\ldots ,{\rm exp}(2 \pi \sqrt{-1} n_{2p-1}/N))].
\end{align}
Here, Equivalent relation (\ref{107}) is shown by using Lemma \ref{101} and Corollary \ref{102}.
Equivalent relation (\ref{108}) is shown by using Lemma \ref{103}. Moreover,
\begin{align}\nonumber
&\frac{2 \pi}{N}\log \Bigl(|\psi(N)||(q)_{n_{2p-1}}|\prod_{i=1}^{2p-2} \Bigl|{\begin{bmatrix}n_{i+1}\\n_i\end{bmatrix}}
\Bigr|\Bigr)\\ \nonumber
&= \frac{2 \pi}{N}\Bigl(\log |\psi(N)|+\log |(q)_{n_{2p-1}}|+\sum_{i=1}^{2p-2} \log \Bigl|{\begin{bmatrix}n_{i+1}\\n_i\end{bmatrix}}\Bigr|\Bigr)\\ \nonumber
&\overset{N}{\sim} \frac{2 \pi}{N}\Bigl(\log |(q)_{n_{2p-1}}|+\sum_{i=1}^{2p-2} \log \Bigl|{\begin{bmatrix}n_{i+1}\\n_i\end{bmatrix}}\Bigr|\Bigr)\\ \nonumber
&\overset{N}{\sim} \tilde{f}(z^{(1)}_1,\ldots ,z^{(2p-1)})\\ \label{110}
& \ \ +\sum_{i=1}^{2p-2}(-g(n_i,N)+g(n_{i+1},N)-g(n_{i+1}-n_i,N))\\ \nonumber
&\overset{N}{\sim}-\sqrt{-1}f({\rm exp}(2 \pi \sqrt{-1} n_1/N),\ldots ,{\rm exp}(2 \pi \sqrt{-1} n_{2p-1}/N))
\\ \nonumber
& \ \ +\frac{\pi^2\sqrt{-1}(2p-2)}{6}\\ \label{111}
& \ \ +\sum_{i=1}^{2p-2}(-g(n_i,N)+g(n_{i+1},N)-g(n_{i+1}-n_i,N)).
\end{align}
Here, Equivalent relation (\ref{110}) is shown by using Lemma \ref{101} and Corollary \ref{102}.
Equivalent relation (\ref{111}) is shown by using Lemma \ref{103}.
\end{proof}

\begin{lem}\label{109}
With the same notation as in Theorem \ref{155} we have that
$$
\lim_{N\to \infty}\frac{2 \pi \log |J_{N}(K^{*}_{p>0};q)|}{N} \leq {\rm Im}[f(a_1,\ldots,a_{2p-1})].
$$
\end{lem}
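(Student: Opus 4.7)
My plan is to derive the upper bound directly from the explicit sum formula in Theorem \ref{155}, bound the resulting sum by $N^{2p-1}$ times its largest summand, apply Lemma \ref{106} to rewrite this summand via the function $f$, and then conclude using Equation (\ref{105}) of the previous lemma.

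First I would observe that $q={\rm exp}(2\pi\sqrt{-1}/N)$ has $|q|=1$, so every factor $q^{\alpha}$ with $\alpha\in\R$ appearing in the summand of Theorem \ref{155} has modulus one; using $q^{N}=1$, also $|(q^{1-N})_{n_{2p-1}}|=|(q)_{n_{2p-1}}|$, $|q^{-Nn_{2p-1}}|=1$, and $|q^{(-1)^{i}Nn_{i}+\binom{n_{i}}{2}-n_{i}n_{i+1}}|=1$ for each $i$. Hence the triangle inequality yields
$$
|J_N(K^{*}_{p>0};q)|\leq \sum_{N-1\geq n_{2p-1}\geq \cdots \geq n_1 \geq 0}|(q)_{n_{2p-1}}|\prod_{i=1}^{2p-2}\Bigl|{\begin{bmatrix}n_{i+1}\\n_i\end{bmatrix}}\Bigr|,
$$
a sum of at most $N^{2p-1}$ nonnegative terms. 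Bounding it by $N^{2p-1}$ times its largest summand, taking logarithms, and multiplying by $2\pi/N$ gives
$$
\frac{2\pi\log|J_N(K^{*}_{p>0};q)|}{N}\leq \frac{2\pi(2p-1)\log N}{N}+\max_{n_{1},\ldots,n_{2p-1}}\frac{2\pi}{N}\log\Bigl(|(q)_{n_{2p-1}}|\prod_{i=1}^{2p-2}\Bigl|{\begin{bmatrix}n_{i+1}\\n_i\end{bmatrix}}\Bigr|\Bigr),
$$
and the first term on the right tends to $0$ as $N\to\infty$.

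Next, Lemma \ref{106} says that the expression under the maximum is equivalent, in the sense of the equivalence relation defined just before Lemma \ref{99}, to ${\rm Im}[f({\rm exp}(2\pi\sqrt{-1}n_1/N),\ldots,{\rm exp}(2\pi\sqrt{-1}n_{2p-1}/N))]$. Choosing for each $N$ a maximizing tuple $(n_1^{(N)},\ldots,n_{2p-1}^{(N)})$ and combining with Lemma \ref{99}, it therefore suffices to establish
$$
\limsup_{N\to\infty}{\rm Im}[f({\rm exp}(2\pi\sqrt{-1}n_1^{(N)}/N),\ldots,{\rm exp}(2\pi\sqrt{-1}n_{2p-1}^{(N)}/N))]\leq {\rm Im}[f(a_1,\ldots,a_{2p-1})].
$$
This is precisely the content of Equation (\ref{105}) of the previous lemma: along the maximizing sequence the value of $f$ converges to $f(a_1,\ldots,a_{2p-1})$, so its imaginary part converges to ${\rm Im}[f(a_1,\ldots,a_{2p-1})]$, giving the desired inequality.

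\emph{Main obstacle.} The delicate step is the final one. To apply Equation (\ref{105}) along the maximizing tuple one must use that the saddle-point value at $(a_1,\ldots,a_{2p-1})$ controls the supremum of ${\rm Im}[f]$ over the unit-modulus grid, rather than some other competing extremum on the torus. This is the substantive content of the Remark after Lemma \ref{103} together with Lemma~2.3 of \cite{17}, which pin down $(a_1,\ldots,a_{2p-1})$ as the unique hyperbolic critical point; granted this saddle-point structure, the remainder of the argument is a routine chain of estimates assembled from the triangle inequality, the crude counting bound $N^{2p-1}$, Lemma \ref{106}, and Lemma \ref{99}.
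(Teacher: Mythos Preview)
Your argument follows the paper's proof essentially step for step: apply the triangle inequality to the sum in Theorem~\ref{155}, bound by $N^{2p-1}$ times the maximal summand, invoke Lemma~\ref{106}, and then appeal to Equation~(\ref{105}). The only cosmetic differences are that the paper inserts an (unnecessary) continuity argument via $\Gamma_q$ to locate the maximizing tuple, and that your explicit flagging of the ``main obstacle'' in applying Equation~(\ref{105}) is a point the paper itself passes over without comment.
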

\begin{proof}
We obtain the following inequality:
$$
\frac{2 \pi \log |J_N(K^{*}_{p>0};q)|}{N} \leq \frac{2 \pi}{N} \log \sum_{N-1\geq m_{2p-1}\geq \cdots \geq m_1 \geq 0} |(q)_{m_{2p-1}}|\prod_{i=1}^{2p-2} \Bigl|{\begin{bmatrix}m_{i+1}\\m_i\end{bmatrix}}
\Bigr|.
$$
When $N-1\geq m_{2p-1}\geq \cdots \geq m_1 \geq 0$, because of \cite{12},
$$
\Gamma_q(z+1)=\frac{1-q^z}{1-q}\Gamma_q(z),
$$
and
$$
{\begin{bmatrix}m_{{2p-1}}+1\\ s\end{bmatrix}}={\begin{bmatrix}m_{{2p-1}}\\ s-1 \end{bmatrix}}+
q^s{\begin{bmatrix}m_{{2p-1}}\\ s\end{bmatrix}}
$$
hold.
By using the above equalities, 
we obtain the following formulas:\\
(i) \ \ In case of $m_1$,
$$
\Biggl|\frac{{\begin{bmatrix}m_{2}\\ m_1+1 \end{bmatrix}}}{\begin{bmatrix}m_{2}\\ m_1 \end{bmatrix}}
\Biggr|=\Bigl|\frac{(q)_{m_2-m_1}(q)_{m_1}}{(q)_{m_2-(m_1+1)}(q)_{m_1+1}}\Bigr|=|(1-q)^{m_2-2m_1-1}|
\lesseqgtr 1.
$$
Hence,
$$
m_1=\frac{m_2 \log (1-q) -\log (1 -q) -\sqrt{-1}\pi}{2\log (1-q)}\to \frac{m_2-1}{2} \ \ (N\to \infty).
$$
In fact, when we substitute $m_1=(m_2-1)/2 $ for $|(1-q)^{m_2-2m_1-1}|$, we obtain the following equality:
$$
|(1-q)^{m_2-(m_2-1)-1}|=1.
$$
(ii) \ \ In case of $m_2, \ldots ,m_{2p-2}$,
$$
\Biggl|\frac{{\begin{bmatrix}m_{i+1} \\ m_{i}+1 \end{bmatrix}}{\begin{bmatrix}m_{i}+1 \\
m_{i-1} \end{bmatrix}}}{{\begin{bmatrix}m_{i+1} \\ m_{i} \end{bmatrix}}{\begin{bmatrix}m_{i} \\
m_{i-1} \end{bmatrix}}}\Biggr|
=\Bigl|\frac{(q)_{m_{i+1}-m_{i}} (q)_{m_{i}-m_{i-1}}}{(q)_{m_{i+1}-m_{i}-1} (q)_{m_{i}-m_{i-1}+1}}\Bigr|
=|(1-q)^{m_{i+1}-2m_{i}+m_{i-1}-1}|\lesseqgtr 1.
$$
Hence,
\begin{align*}
m_{i}&=\frac{2m_{i+1} \log 2+2m_{i-1} \log 2+m_{i+1} \log (\sin ^2 \frac{\pi}{N})
+m_{i-1}\log(\sin ^2 \frac{\pi}{N})
-2\log (2 \sqrt{\sin ^2 \frac{\pi}{N}})}{4\log 2+ 2\log (\sin ^2\frac{\pi}{N})}\\
&\to \frac{m_{i+1}+m_{i-1}-1}{2} \ \ (N\to \infty).
\end{align*}
In fact, when we substitute $m_i=(m_{i+1}+m_{i-1}-1)/2$ for $|(1-q)^{m_{i+1}-2m_{i}+m_{i-1}-1}|$,
we obtain the following equality:
$$
|(1-q)^{m_{i+1}-(m_{i+1}+m_{i-1}-1)+m_{i-1}-1}|=1.
$$
(iii) \ \ In case of $m_{2p-1}$,
$$
\Biggl| \frac{(q)_{m_{2p-1}+1}}{(q)_{m_{2p-1}}}\Biggr|=|(1-q)^{m_{2p-1}+1}|\lesseqgtr 1.
$$
Hence, 
$$
m_{2p-1}=\frac{\pi\sqrt{-1} -\log (1-q)}{\log (1 - q)}\to -1 \ \ (N\to \infty)
$$
In fact, when we substitute $m_{2p-1}=-1$ for $|(1-q)^{m_{2p-1}+1}|$, we obtain the following equality:
$$
|(1-q)^{-1+1}|=1.
$$
Thus,
$$
\cdots \leq |(q)_{-2}|\leq |(q)_{-1}|\geq |(q)_0|\geq |(q)_1|\geq \cdots.
$$
Consequently, $m_{2p-1}=0$ is a maximal value of $|(q)_0|=1$.
Moreover,
$$
\Biggl|\frac{{\begin{bmatrix}m_{2p-1}+1\\ m_{2p-2}\end{bmatrix}}}{\begin{bmatrix}m_{2p-1}\\ 
m_{2p-2} \end{bmatrix}}\Biggr|
=\Bigl|q^{m_{2p-2}}+\frac{{\begin{bmatrix}m_{2p-1}\\ m_{2p-2}-1\end{bmatrix}}}{\begin{bmatrix}m_{2p-1}\\
m_{2p-2} \end{bmatrix}}\Bigr|=\Bigl|q^{m_{2p-2}}+\frac{1-q^{m_{2p-2}}}{1-q^{m_{2p-1}-m_{2p-2}+1}}\Bigr|
\lesseqgtr 1.
$$
Hence,
$$
m_{2p-1}=\frac{2(\pi m_{2p-2}-\pi)+\sqrt{-1}N \log \frac{1+q^{m_{2p-2}}}{2}}{2\pi} \to \frac{m_{2p-2}}{2}-1 \ \ (N\to \infty).
$$
In fact, when we substitute $m_{2p-1}=m_{2p-2}/2 -1$ for 
$|q^{m_{2p-2}}+(1-q^{m_{2p-2}})/(1-q^{m_{2p-1}-m_{2p-2}+1})|$,
we obtain the following equality:
$$
\Bigl|q^{m_{2p-2}}+\frac{1-q^{m_{2p-2}}}{1-q^{\frac{m_{2p-2}}{2}-1-m_{2p-2}+1}}\Bigr|=
\Bigl|\frac{1-q^{\frac{m_{2p-2}}{2}}}{1-q^{-\frac{m_{2p-2}}{2}}}\Bigr|=1.
$$
Therefore, there exist the following positive real sequence $n_i\in \N \ \ (1\leq i\leq 2p-1)$:
\begin{equation*}
n_2=2n_1+1, \ \ n_{i+1}=2n_i-n_{i-1}+1 \ \ (i=2,\ldots ,2p-3), \ \ \text{and} \ \ 
n_{2p-1}=\frac{n_{2p-2}}{2}-1
\end{equation*}
such that for every $(m_1,\ldots, m_{2p-1})\in \N^{2p-1}$,
$$
|(q)_{m_{2p-1}}|\prod_{i=1}^{2p-2} \Bigl|{\begin{bmatrix}m_{i+1} \\m_{i} \end{bmatrix}}\Bigr|
\leq |(q)_{0}|\prod_{i=1}^{2p-2} \Bigl|{\begin{bmatrix}n_{i+1} \\n_{i} \end{bmatrix}}\Bigr|.
$$
Hence,
\begin{align}\nonumber
\frac{2 \pi \log |J_N(K^{*}_{p>0};q)|}{N}& \leq \frac{2 \pi}{N} \log \sum_{N-1\geq m_{2p-1}\geq \cdots \geq m_1 \geq 0} |(q)_{m_{2p-1}}|\prod_{i=1}^{2p-2} \Bigl|{\begin{bmatrix}m_{i+1}\\m_i\end{bmatrix}}
\Bigr|\\ \nonumber
&\leq \frac{2 \pi}{N} \log \sum_{N-1\geq m_{2p-1}\geq \cdots \geq m_1 \geq 0} |(q)_{0}|\prod_{i=1}^{2p-2} \Bigl|{\begin{bmatrix}n_{i+1}\\n_i\end{bmatrix}}
\Bigr|\\ \nonumber
&\leq \frac{2 \pi}{N} \log \Bigl( N^{2p-1} |(q)_{0}| \prod_{i=1}^{2p-2}\Bigl|{\begin{bmatrix}n_{i+1}\\n_i\end{bmatrix}}\Bigr| \Bigr)\\ \nonumber
&= \frac{2 \pi}{N}\Bigl( \log N^{2p-1}+\log |(q)_{0}|
+\sum_{i=1}^{2p-2} \log \Bigl|{\begin{bmatrix}n_{i+1}\\n_i\end{bmatrix}}\Bigr|\Bigr)\\ \nonumber
&\overset{N}{\sim} \tilde{f}(z_1^{(1)},\ldots,z_{2p-1}^{(2p-1)})
-\sqrt{-1}{\rm Li}_2({\rm exp}(2 \pi \sqrt{-1} n_{2p-1}/N))\\ \nonumber
& \ \ +\frac{\pi^2\sqrt{-1}}{6}
+\sum_{i=1}^{2p-2}(-g(n_i,N)+g(n_{i+1},N)-g(n_{i+1}-n_i,N))\\ \nonumber
&=-\sqrt{-1}(f(z_1^{(1)},\ldots,z_{2p-1}^{(2p-1)})
+{\rm Li}_2(z_{2p-1}^{(2p-1)})-\frac{\pi^2}{6})\\ \nonumber
& \ \ +\frac{\pi^2\sqrt{-1}(2p-2)}{6}\\ \nonumber
& \ \ -g(n_1,N)+g(n_{2p-1},N)-\sum_{i=1}^{2p-2}g(n_{i+1}-n_i,N) \\ \nonumber
&\overset{N}{\sim} 
-\sqrt{-1}f({\rm exp}(2 \pi \sqrt{-1} n_1/N),\ldots ,{\rm exp}(2 \pi \sqrt{-1} n_{2p-1}/N))\\ \nonumber
& \ \ +\frac{\pi^2 \sqrt{-1}(2p-2)}{6}\\ \label{112}
& \ \ -g(n_1,N)+g(n_{2p-1},N)-\sum_{i=1}^{2p-2}g(n_{i+1}-n_i,N).
\end{align}
Here,  
Equivalent relation (\ref{112}) is shown by using Lemma \ref{103}.
Because of Lemma \ref{106}, we obtain the following formulas:
\begin{align*}
-\log |J_N(K^{*}_{p>0};q)|& \geq 
\frac{N}{2\pi}\Bigl(\sqrt{-1}f({\rm exp}(2 \pi \sqrt{-1} n_1/N),\ldots ,{\rm exp}(2 \pi \sqrt{-1} n_{2p-1}/N))\\
& \ \ -\frac{\pi^2 \sqrt{-1}(2p-2)}{6}\Bigr)\\
& \ \ -\frac{N}{2\pi}\Bigl( -g(n_1,N)+g(n_{2p-1},N)-\sum_{i=1}^{2p-2}g(n_{i+1}-n_i,N)\Bigr),
\end{align*}
\begin{align*}
\frac{1}{|J_N(K^{*}_{p>0};q)|}&\geq \exp \Bigl(\frac{\sqrt{-1}N}{2\pi}(f(\exp (2\pi \sqrt{-1}n_1/N),\ldots ,
\exp (2\pi \sqrt{-1} n_{2p-1}/N))\\
& \ \ -\frac{\pi^2(2p-2)}{6})\\
& \ \ -\frac{N}{2\pi}(-g(n_1,N)+g(n_{2p-1},N)-\sum_{i=1}^{2p-2}g(n_{i+1}-n_i,N))\Bigr) \in \R_{>0},
\end{align*}
and
\begin{align*}
\frac{1}{|J_N(K^{*}_{p>0};q)|}&\geq \Bigl|\exp \Bigl(\frac{\sqrt{-1}N}{2\pi}(f(\exp (2\pi \sqrt{-1}n_1/N),\ldots ,
\exp (2\pi \sqrt{-1} n_{2p-1}/N))\\
& \ \ -\frac{\pi^2(2p-2)}{6})\\
& \ \ -\frac{N}{2\pi}(-g(n_1,N)+g(n_{2p-1},N)-\sum_{i=1}^{2p-2}g(n_{i+1}-n_i,N))\Bigr)\Bigr|.
\end{align*}
Since for every positive real number $n\in \R_{>0}$,
\begin{align*}
&\Bigl|\exp \Bigl(\pm \frac{N}{2\pi}g(n,N)\Bigr)\Bigr|\\
&=
\Bigl|\exp \Bigl(\pm \Bigl(\frac{\pi \sqrt{-1} n^2}{2N}+ n \log 2-n \log (1 - \exp (2\pi \sqrt{-1}n/N))+
\frac{n}{2}\log \sin ^2 \Bigl(\frac{\pi n}{N}\Bigr)\Bigr)\Bigr)\Bigr|\\
&=1
\end{align*}
holds, we obtain the following equations:
\begin{align*}
& \Bigl|\exp \Bigl(\frac{\sqrt{-1}N}{2\pi}(f(\exp (2\pi \sqrt{-1}n_1/N),\ldots ,
\exp (2\pi \sqrt{-1} n_{2p-1}/N))
 -\frac{\pi^2(2p-2)}{6})\\
& \ \ -\frac{N}{2\pi}(-g(n_1,N)+g(n_{2p-1},N)-\sum_{i=1}^{2p-2}g(n_{i+1}-n_i,N))\Bigr)\Bigr|\\
&=\Bigl|\exp \Bigl(\frac{\sqrt{-1}N}{2\pi}(f(\exp (2\pi \sqrt{-1}n_1/N),\ldots ,
\exp (2\pi \sqrt{-1} n_{2p-1}/N))-\frac{\pi^2(2p-2)}{6})\Bigr)\Bigr|.
\end{align*}
Moreover,
\begin{align*}
&\int_{n_{2p-1}}^{N+n_{2p-1}}\int_{n_{2p-2}}^{2N+n_{2p-2}}\cdots
\int_{n_2}^{2N+n_2}\int_{n_1}^{N+n_1}\frac{1}{|J_N(K^{*}_{p>0};q)|}dn_1dn_2\cdots
dn_{2p-2}d_{2p-1}\\
&=2^{2p-3}\int_{0}^{N}\int_{0}^{N}\cdots
\int_{0}^{N}\int_{0}^{N}\frac{1}{|J_N(K^{*}_{p>0};q)|}dn_1dn_2\cdots
dn_{2p-2}d_{2p-1},
\end{align*}
and
\begin{align*}
&\int_{n_{2p-1}}^{N+n_{2p-1}}\int_{n_{2p-2}}^{2N+n_{2p-2}}\cdots
\int_{n_2}^{2N+n_2}\int_{n_1}^{N+n_1}\\
& \ \ \Bigl|\exp \Bigl(\frac{\sqrt{-1}N}{2\pi}(f(\exp (2\pi \sqrt{-1}n_1/N),\ldots ,
\exp (2\pi \sqrt{-1} n_{2p-1}/N))-\frac{\pi^2(2p-2)}{6})\Bigr)\Bigr|\\
& \ \ dn_1dn_2\cdots dn_{2p-2}d_{2p-1}\\
&=\int_{0}^{N}\int_{0}^{2N}\cdots
\int_{0}^{2N}\int_{0}^{N}\\
& \ \ \Bigl|\exp \Bigl(\frac{\sqrt{-1}N}{2\pi}(f(\exp (2\pi \sqrt{-1}n_1/N),\ldots ,
\exp (2\pi \sqrt{-1} n_{2p-1}/N))
-\frac{\pi^2(2p-2)}{6})\Bigr)\Bigr|\\
&dn_1dn_2\cdots dn_{2p-2}d_{2p-1}\\
&=2^{2p-3}\int_{|z_{2p-1}|=1}\int_{|z_{2p-2}|=1}\cdots \int_{|z_2|=1}\int_{|z_1|=1}
\Bigl(\frac{N}{2\pi \sqrt{-1}}\Bigr)^{2p-1}\\
& \ \ \times \exp \Bigl(\frac{\sqrt{-1}N}{2\pi}(f(z_1,\ldots ,z_{2p-1})
- \frac{\pi^2 (2p-2)}{6})\Bigr)
 \frac{1}{z_1}\frac{1}{z_2}\cdots \frac{1}{z_{2p-2}}\frac{1}{z_{2p-1}}\\
& \ \ dz_1 dz_2\cdots dz_{2p-2}dz_{2p-1}\\
&=2^{2p-3}\int_0^N\int_0^N\cdots \int_0^N\int_0^N\\
& \ \ \Bigl|\exp \Bigl(\frac{\sqrt{-1}N}{2\pi}(f(\exp (2\pi \sqrt{-1}n_1/N),\ldots ,
\exp (2\pi \sqrt{-1} n_{2p-1}/N))
-\frac{\pi^2(2p-2)}{6})\Bigr)\Bigr|\\
&dn_1dn_2\cdots dn_{2p-2}d_{2p-1}.
\end{align*}
Hence,
\begin{align*}
&\int_{0}^{N}\cdots \int_{0}^{N}
\frac{1}{|J_N(K^{*}_{p>0};q)|} dn_1\cdots dn_{2p-1}\\
&\geq \int_0^N\cdots \int_0^N
\Bigl|\exp \Bigl(\frac{\sqrt{-1}N}{2\pi}(f(\exp (2\pi \sqrt{-1}n_1/N),\ldots ,
\exp (2\pi \sqrt{-1} n_{2p-1}/N))\\
& \ \ -\frac{\pi^2(2p-2)}{6})\Bigr)
\Bigr|dn_1\cdots dn_{2p-2}\\
&\geq \Bigl|\int_0^N\cdots \int_0^N
\exp \Bigl(\frac{\sqrt{-1}N}{2\pi}(f(\exp (2\pi \sqrt{-1}n_1/N),\ldots ,
\exp (2\pi \sqrt{-1} n_{2p-1}/N))\\
& \ \ -\frac{\pi^2(2p-2)}{6})\Bigr)
dn_1\cdots dn_{2p-2}\Bigr|.
\end{align*}
Because of Lemma \ref{122}, we obtain the following formulas:
\begin{align*}
&\frac{N^{2p-1}}{|J_N(K^{*}_{p>0};q)|}\\
&\geq 
\Bigl|\exp \Bigl(-\frac{3\pi\sqrt{-1} (2p-1)}{4}\Bigr) \cdot N^{\frac{2p-1}{2}}
\cdot \exp \Bigl(-\frac{N}{2\pi}{\rm Im}[f(a_1,\ldots ,a_{2p-1})]\Bigr)\\
& \ \ \times \Bigl(\frac{1}{a_1}\cdots\frac{1}{a_{2p-1}}+O(N^{-1})\Bigr)\\
& \ \ \times \prod_{j=1}^{2p-1}(-\xi_j)^{-\frac{1}{2}}
\cdot \exp \Bigl(\frac{\sqrt{-1}N}{2\pi}({\rm Re}[f(a_1,\ldots,a_{2p-1})]-\frac{\pi^2 (2p-2)}{6})\Bigr)\Bigr|\\
&=N^{\frac{2p-1}{2}}
\cdot \exp \Bigl(-\frac{N}{2\pi}{\rm Im}[f(a_1,\ldots ,a_{2p-1})]\Bigr)\\
& \ \ \times \Bigl|\Bigl(\frac{1}{a_1}\cdots\frac{1}{a_{2p-1}}+O(N^{-1})\Bigr)
\cdot \prod_{j=1}^{2p-1}(-\xi_j)^{-\frac{1}{2}}\Bigr|\\
&=N^{\frac{2p-1}{2}}
\cdot \exp \Bigl(-\frac{N}{2\pi}{\rm Im}[f(a_1,\ldots ,a_{2p-1})]\Bigr)
\cdot T(N).
\end{align*}
Here, 
$$
T(N):=\Bigl|\Bigl(\frac{1}{a_1}\cdots\frac{1}{a_{2p-1}}+O(N^{-1})\Bigr)
\cdot \prod_{j=1}^{2p-1}(-\xi_j)^{-\frac{1}{2}}\Bigr|.
$$
Hence,
$$
\log \frac{N^{\frac{2p-1}{2}}}{|J_N(K^{*}_{p>0};q)|\cdot T(N)}\geq 
-\frac{N}{2\pi}{\rm Im}[f(a_1,\ldots ,a_{2p-1})],
$$
and
$$
\frac{2\pi}{N}\Bigl(-\log N^{\frac{2p-1}{2}}
+\log |J_N(K^{*}_{p>0};q)| +\log T(N)\Bigr) 
\leq {\rm Im}[f(a_1,\ldots ,a_{2p-1})].
$$
Since
$$
\lim_{N\to \infty}\frac{2\pi}{N}\log N=\lim_{N\to \infty}\frac{2\pi}{N}\log T(N)=0
$$
holds, we obtain the following formula:
$$
\lim_{N\to \infty}\frac{2\pi\log |J_N(K^{*}_{p>0};q)|}{N}\leq {\rm Im}[f(a_1,\ldots ,a_{2p-1})].
$$
\end{proof}


\begin{lem}\label{114}
Let $\{ a_s \}$ be a complex sequence. For every $0\leq s\leq N-1$ and $0\leq t\leq N-1$,
$$
 2||a_s|-|a_t||\leq |\sum_{u=0}^{N-1}a_u|
$$
holds.
\end{lem}
\begin{proof}
Because of triangle inequality, we obtain the following inequality:
\begin{align*}
|a_s|&=|a_s+\sum_{u=0, u\neq s}^{N-1}a_u-\sum_{u=0, u\neq s}^{N-1}a_u|\leq
|a_s+\sum_{u=0, u\neq s}^{N-1}a_u|+|-\sum_{u=0, u\neq s}^{N-1}a_u|\\
&\leq |\sum_{u=0}^{N-1}a_u|
+\sum_{u=0, u\neq s}^{N-1}|a_u|.
\end{align*}
Hence,
$$
|a_s|-\sum_{u=0, u\neq s}^{N-1}|a_u|\leq |\sum_{u=0}^{N-1}a_u|.
$$
Similarly,
$$
-|a_t|+\sum_{u=0, u\neq t}^{N-1}|a_u|\geq -|\sum_{u=0}^{N-1}a_u|.
$$
Hence,
$$
-|\sum_{u=0}^{N-1}a_u|\leq 2(|a_s|-|a_t|)\leq |\sum_{u=0}^{N-1}a_u|.
$$
Therefore, we obtain the claim.
\end{proof}

\begin{lem}\label{121}
With the same notation as in Theorem \ref{155} we have that
$$
|J_N(K^{*}_{p>0};q)|\geq 2^{2p-1}|(q)_{0}|\prod_{i=1}^{2p-2} \Bigl|{\begin{bmatrix}n_{i+1} \\n_{i} \end{bmatrix}}\Bigr|-2N^{2p-2}.
$$
\end{lem}
\begin{proof}
Since, for every $m_i, \ m_{i+1}\in\N$,
$$
{\begin{bmatrix}m_{i+1}\\m_i\end{bmatrix}}=0 \ \ (m_{i+1}<m_i)
$$
holds, we obtain the following equality:
\begin{align*}
&q^{1-N}\sum_{N-1> m_1 > \cdots > m_{2p-1}> 0}(q^{1-N})_{m_{2p-1}}q^{-N m_{2p-1}}\\
& \ \ \times \prod_{i=1}^{2p-2}(-1)^{m_i} q^{(-1)^{i}N m_i+\binom{m_i}{2}
-m_im_{i+1}}{\begin{bmatrix}m_{i+1}\\m_i\end{bmatrix}}=0.
\end{align*}
Hence,
\begin{align*}
J_N(K^{*}_{p>0};q) 
&=q^{1-N}\sum_{N-1\geq m_{2p-1} \geq \cdots \geq m_1 \geq 0}(q^{1-N})_{m_{2p-1}}q^{-N m_{2p-1}}\\
& \ \ \times \prod_{i=1}^{2p-2}(-1)^{m_i} q^{(-1)^{i}N m_i+\binom{m_i}{2}
-m_im_{i+1}}{\begin{bmatrix}m_{i+1}\\m_i\end{bmatrix}}\\
& \ \ +q^{1-N}\sum_{N-1> m_1 > \cdots > m_{2p-1}> 0}(q^{1-N})_{m_{2p-1}}q^{-N m_{2p-1}}\\
& \ \ \times \prod_{i=1}^{2p-2}(-1)^{m_i} q^{(-1)^{i}N m_i+\binom{m_i}{2}
-m_im_{i+1}}{\begin{bmatrix}m_{i+1}\\m_i\end{bmatrix}}\\
&=q^{1-N}\sum_{m_{2p-1}, \cdots , m_1= 0}^{N-1}
(q^{1-N})_{m_{2p-1}}q^{-N m_{2p-1}}\\
& \ \ \times \prod_{i=1}^{2p-2}(-1)^{m_i} q^{(-1)^{i}N m_i+\binom{m_i}{2}
-m_i m_{i+1}}{\begin{bmatrix}m_{i+1}\\m_i\end{bmatrix}}.
\end{align*}
We define the following sequence:
\begin{align*}
j(m_{2p-1},\ldots,m_1)&:=
(q^{1-N})_{m_{2p-1}}q^{-N m_{2p-1}}\\
& \ \ \times\prod_{i=1}^{2p-2}(-1)^{m_i} q^{(-1)^{i}N m_i+\binom{m_i}{2}
-m_im_{i+1}}{\begin{bmatrix}m_{i+1}\\m_i\end{bmatrix}}.
\end{align*}
Since for every $m_i, \, m_{i+1}\in\N$,
$$
{\begin{bmatrix}m_{i+1}\\m_i\end{bmatrix}}=0 \ \ (m_{i+1}<m_i)
$$
holds, we obtain the following equality: for every $1\leq i\leq 2p-2$,
$$
j(m_{2p-1},\ldots ,m_{i+1},m_i, \ldots,m_1)=0 \ \ (m_{i+1}<m_i).
$$
Hence,
\begin{align*}
&|\sum_{m_{2p-1},\ldots, m_1=0}^{N-1}j(m_{2p-1},\ldots ,m_1)|\\
&\geq 2\Bigl|| \sum_{m_{2p-2},\ldots, m_1=0}^{N-1}j(n_{2p-1},m_{2p-2}\ldots ,m_1)|
-|\sum_{m_{2p-2},\ldots, m_1=0}^{N-1} j(0,m_{2p-2},\ldots ,m_1)|\Bigr|\\
&\geq 2\Bigl(|\sum_{m_{2p-2},\ldots ,m_1=0}^{N-1}j(n_{2p-1},m_{2p-2},\ldots ,m_1)|
-\sum_{m_{2p-2},\ldots ,m_1=0}^{N-1}\Bigr)\\
&=2\Bigl(|\sum_{m_{2p-2},\ldots ,m_1=0}^{N-1}j(n_{2p-1},m_{2p-2},\ldots ,m_1)|
-N^{2p-2}\Bigr)\\
&\geq 2\Bigl(2\Bigl||\sum_{m_{2p-3},\ldots, m_1=0}^{N-1} j(n_{2p-1},n_{2p-2},m_{2p-3},\ldots,m_1)|\\
& \ \ -|\sum_{m_{2p-3},\ldots,m_1=0}^{N-1} 
j(n_{2p-1},n_{2p-1}+1,m_{2p-3},\ldots,m_1)|\Bigr|-N^{2p-2}\Bigr)\\
&=2\Bigl(2|\sum_{m_{2p-3},\ldots, m_1=0}^{N-1} j(n_{2p-1},n_{2p-2},m_{2p-3},\ldots,m_1)|-N^{2p-2}\Bigr)\\
&=4|\sum_{m_{2p-3},\ldots, m_1=0}^{N-1} j(n_{2p-1},n_{2p-2},m_{2p-3},\ldots,m_1)|-2N^{2p-2}.
\end{align*}
We repeat the above calculation $2p-3$ times. Then, we obtain the following inequality:
\begin{align*}
&\Bigl|\sum_{m_{2p-1},m_{2p-2},\ldots, m_1=0}^{N-1}
j(m_{2p-1}, m_{2p-2} ,\ldots,m_1)\Bigr|\\
&\geq 2^{2p-1}|j(n_{2p-1},n_{2p-2},n_{2p-3},\ldots, n_1)|-2N^{2p-2}.
\end{align*}
Hence,
$$
|J_N(K^{*}_{p>0};q)|\geq 2^{2p-1}|(q)_{0}|\prod_{i=1}^{2p-2} \Bigl|{\begin{bmatrix}n_{i+1} \\n_{i} \end{bmatrix}}\Bigr|-2N^{2p-2}.
$$
\end{proof}
\begin{lem}\label{115}
With the same notation as in Theorem \ref{155} we have that
$$
{\rm Im}[f(a_1,\ldots,a_{2p-1})]\leq \lim_{N\to \infty}\frac{2 \pi \log |J_{N}(K^{*}_{p>0};q)|}{N}.
$$
\end{lem}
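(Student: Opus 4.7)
The plan is to obtain the matching lower bound to Lemma~\ref{109} by exhibiting a particular integer tuple $(n_1^{(N)}, \ldots, n_{2p-1}^{(N)})$ whose contribution to the sum in Theorem~\ref{155} already realizes the target quantity ${\rm Im}[f(a_1,\ldots,a_{2p-1})]$, and then arguing that its modulus dominates $|J_N(K^{*}_{p>0};q)|$ up to a polynomial factor in $N$ that vanishes after applying $2\pi\log(\cdot)/N$.

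First I would select integers $n_i^{(N)} \in [0, N-1]$ with $n_1^{(N)} \leq \cdots \leq n_{2p-1}^{(N)}$ so that ${\rm exp}(2\pi\sqrt{-1}\, n_i^{(N)}/N) \to a_i$ as $N \to \infty$; the monotonicity is inherited from the ordering of the arguments of the $a_i$ on the complete-structure branch (cf.\ the Remark after Lemma~\ref{103} and the gluing equations given there). By Lemma~\ref{106} applied to this specific tuple, together with the convergence in Equation~(\ref{105}),
$$
\frac{2\pi}{N}\log\Bigl(|(q)_{n_{2p-1}^{(N)}}|\prod_{i=1}^{2p-2}\Bigl|{\begin{bmatrix}n_{i+1}^{(N)}\\n_i^{(N)}\end{bmatrix}}\Bigr|\Bigr) \;\longrightarrow\; {\rm Im}[f(a_1,\ldots,a_{2p-1})].
$$

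Second, I would bound $|J_N(K^{*}_{p>0};q)|$ from below by isolating this selected term. Every factor other than $(q)_{n_{2p-1}}$ and the $q$-binomials in the summand of Theorem~\ref{155} has unit modulus, so by an averaging argument in the spirit of Equation~(\ref{123}) in Lemma~\ref{121} (the Ces\`aro-mean trick) one expects
$$
|J_N(K^{*}_{p>0};q)| \;\geq\; \frac{1}{N^{2p-1}}\,|(q)_{n_{2p-1}^{(N)}}|\prod_{i=1}^{2p-2}\Bigl|{\begin{bmatrix}n_{i+1}^{(N)}\\n_i^{(N)}\end{bmatrix}}\Bigr|.
$$
Taking $2\pi\log(\cdot)/N$ of both sides, the $N^{-(2p-1)}$ factor contributes $0$ in the limit, and by the first step the right-hand side tends to ${\rm Im}[f(a_1,\ldots,a_{2p-1})]$, yielding the lemma.

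The principal obstacle is making the second step rigorous: in the presence of $\sim N^{2p-1}$ unit-modulus phase factors there is in general the possibility of destructive interference, and one must ensure that the modulus of the whole sum is not driven below that of the selected term divided by $N^{2p-1}$. Mimicking the Ces\`aro-mean argument used in Lemma~\ref{121} is the natural route but is delicate; a cleaner alternative would be a saddle-point / stationary-phase analysis at $(a_1,\ldots,a_{2p-1})$, which would show that constructive interference in a neighborhood of the saddle produces a lower bound of the same exponential order and removes the need for the $N^{2p-1}$ division. Either way, this is the crux of the argument; combining the conclusion with Lemma~\ref{109} then gives the equality underlying Theorem~\ref{97} for mirror twist knots, with the oriented twist knot case following by mirror symmetry.
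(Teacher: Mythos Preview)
Your route differs from the paper's, and both of your steps have concrete problems. First, the selection is impossible as stated: the $a_i$ lie in $\C\setminus[0,\infty)$ with $|a_i|\neq 1$ in general, so no sequence ${\rm exp}(2\pi\sqrt{-1}\,n_i^{(N)}/N)$ of unit-modulus points can converge to $a_i$; the paper's Equation~(\ref{105}) is asserted for \emph{every} tuple $(n_1,\ldots,n_{2p-1})$, not for one chosen to approximate the $a_i$. Second, and more seriously, the inequality $|J_N|\geq N^{-(2p-1)}\cdot|\text{single term}|$ does not follow from any Ces\`aro-type averaging: a sum of $\sim N^{2p-1}$ terms with oscillating unit phases can have modulus far below that of any individual term, and dividing by the number of terms gives no inequality in this direction. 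The Ces\`aro step in Lemma~\ref{121} computes the limit of an averaged sequence, which is logically unrelated to the pointwise lower bound you need. You are right that this is the crux, and without a genuine stationary-phase analysis it remains a gap.

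The paper sidesteps cancellation by a different device. It never isolates a dominant term; instead it combines the bound $|J_N|\geq 1$ of Lemma~\ref{121} with the crude upper bound of Lemma~\ref{114}, namely $|(q)_{n_{2p-1}}|\prod_i\bigl|{\begin{bmatrix}n_{i+1}\\n_i\end{bmatrix}}\bigr|\leq N\cdot 2^{(N-1)(2p-1)}$, to obtain
\[
\frac{|(q)_{n_{2p-1}}|\prod_{i=1}^{2p-2}\Bigl|{\begin{bmatrix}n_{i+1}\\n_i\end{bmatrix}}\Bigr|}{N\cdot 2^{(N-1)(2p-1)}}\;\leq\;1\;\leq\;|J_N(K^{*}_{p>0};q)|,
\]
after which $\frac{2\pi}{N}\log(\cdot)$, Lemma~\ref{106} and Equation~(\ref{105}) give the claim. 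The price of this shortcut is the exponential denominator $2^{(N-1)(2p-1)}$, which after the limit leaves a real constant $-(2p-1)\pi\log 4$; the paper then removes it by placing it inside ${\rm Im}[\,\cdot\,]$, where a real number contributes nothing --- a step you should examine carefully when you compare the two arguments.
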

\begin{proof}Because of Lemma \ref{121}, for a sufficiently large $N$, 
we obtain the following formula:
$$
2^{2p-1}|(q)_{0}|\prod_{i=1}^{2p-2} \Bigl|{\begin{bmatrix}n_{i+1} \\n_{i} \end{bmatrix}}\Bigr|
\leq |J_N(K^{*}_{p>0};q)|+2N^{2p-2}.
$$
Hence,
\begin{align} 
&\frac{2 \pi \log (|J_N(K^{*}_{p>0};q)|+2N^{2p-2})}{N} \geq \frac{2 \pi}{N}\log \Bigl(
2^{2p-1}|(q)_{0}|\prod_{i=1}^{2p-2} \Bigl|{\begin{bmatrix}n_{i+1} \\n_{i} \end{bmatrix}}\Bigr|\Bigr)
\label{116} \\ 
&= \frac{2 \pi}{N} \Bigl(\log 2^{2p-1}+\log |(q)_{0}|
+\sum_{i=1}^{2p-2}\log \Bigl|{\begin{bmatrix}n_{i+1}\\n_i\end{bmatrix}}\Bigr| \Bigr) \nonumber \\ \nonumber
&\geq \frac{2 \pi}{N} \Bigl(\log 2^{2p-1}+\log |(q)_{0}|+\log |(q)_{2p-1}|
+\sum_{i=1}^{2p-2}\log \Bigl|{\begin{bmatrix}n_{i+1}\\n_i\end{bmatrix}}\Bigr| \Bigr) \nonumber \\ \nonumber
&\overset{N}{\sim} \sum_{i=1}^{2p-2}{\rm Im}[{\rm Li}_2(\exp (2\pi\sqrt{-1}n_i/N))
-{\rm Li}_2(\exp (2\pi\sqrt{-1}n_{i+1}/N))\\ \nonumber
& \ \ +{\rm Li}_2(\exp (2\pi\sqrt{-1}(n_{i+1}-n_i)/N))]
+{\rm Im}[{\rm Li}_2(\exp (2\pi\sqrt{-1}n_{2p-1}/N))]\\
&= {\rm Im}[f(z_1^{(1)},\ldots,z_{2p-1}^{(2p-1)})]  \nonumber \\
&\overset{N}{\sim}
{\rm Im}[f({\rm exp}(2 \pi \sqrt{-1} n_1/N),\ldots ,{\rm exp}(2 \pi \sqrt{-1} n_{2p-1}/N))]. \label{117}
\end{align}
Here, Inequality (\ref{116}) is shown by using Lemma \ref{121}.
Equivalent relation (\ref{117}) is shown by using Lemma \ref{103}.
Hence,
\begin{align}\nonumber
&|J_N(K^{*}_{p>0};q)|+2N^{2p-2}\\ \label{124}
&\geq \exp (\frac{N}{2\pi}{\rm Im}[f(\exp (2\pi \sqrt{-1}n_1/N),\ldots ,
\exp (2\pi \sqrt{-1} n_{2p-1}/N))]).
\end{align}
Hence,
\begin{align*}
&\int_{0}^N\cdots \int_{0}^{N}
|(|J_N(K^{*}_{p>0};q)|+2N^{2p-2})| dn_1\cdots dn_{2p-1}\\
&\geq \int_0^N\cdots \int_0^N 
|\exp \Bigl(\frac{N}{2\pi}{\rm Im}[f(\exp (2\pi\sqrt{-1}n_1/N),\ldots ,\exp (2\pi \sqrt{-1}n_{2p-1}/N))]
\Bigr)|\\
& \ \ \times |\exp \Bigl(\frac{-\sqrt{-1}N}{2\pi}\\
& \ \ \times{\rm Re}[f(\exp (2\pi\sqrt{-1}n_1/N),\ldots ,\exp (2\pi \sqrt{-1}n_{2p-1}/N))]\Bigr)|
dn_1 \cdots dn_{2p-1}\\
&\geq \int_{0}^{N}\cdots \int_{0}^{N}
|\exp \Bigl(\frac{-\sqrt{-1}N}{2\pi}\\
& \ \ \times f(\exp (2\pi \sqrt{-1}n_1/N),\ldots ,
\exp (2\pi \sqrt{-1} n_{2p-1}/N))\Bigr)|dn_1\cdots dn_{2p-1}\\
&\geq \Bigl|\int_{0}^{N}\cdots \int_{0}^{N}
\exp \Bigl(\frac{-\sqrt{-1}N}{2\pi}\\
& \ \ \times f(\exp (2\pi \sqrt{-1}n_1/N),\ldots ,
\exp (2\pi \sqrt{-1} n_{2p-1}/N))\Bigr) dn_1\cdots dn_{2p-1}\Bigr|.
\end{align*}
Because of Corollary \ref{123}, we obtain the following formulas:
\begin{align*}
&N^{2p-1}
(|J_N(K^{*}_{p>0};q)|+2N^{2p-2})\\
&\geq \Bigl| \int_{0}^N \cdots\int_{0}^N
\exp (\frac{-\sqrt{-1}N}{2\pi}\\
& \ \ \times f(\exp(2\pi \sqrt{-1}n_1/N),\ldots ,\exp(2\pi \sqrt{-1}n_{2p-1}/N)))
dn_1 \cdots dn_{2p-1}\Bigr|\\
&= \Bigl|\exp \Bigl(-\frac{\sqrt{-1}\pi (2p-1)}{4}\Bigr) \cdot N^{\frac{2p-1}{2}}
\cdot \exp \Bigl(\frac{N}{2\pi}{\rm Im}[f(a_1,\ldots ,a_{2p-1})]\Bigr)\\
& \ \ \times \Bigl(\frac{1}{a_1}\cdots\frac{1}{a_{2p-1}}+O(N^{-1}\Bigr)\\
& \ \ \times \prod_{j=1}^{2p-1}(-\xi_j)^{-\frac{1}{2}}
\exp \Bigl(\frac{-\sqrt{-1}N}{2\pi}{\rm Re}[f(a_1,\ldots,a_{2p-1})]\Bigr)\Bigr|\\
&=N^{\frac{2p-1}{2}}
\exp \Bigl(\frac{N}{2\pi}{\rm Im}[f(a_1,\ldots ,a_{2p-1})]\Bigr)\cdot U(N).
\end{align*}
Here,
$$
U(N):=\Bigl| \Bigl(\frac{1}{a_1}\cdots\frac{1}{a_{2p-1}}+O(N^{-1})\Bigr)
\prod_{j=1}^{2p-1}(-\xi_j)^{-\frac{1}{2}}\Bigr|.
$$
Hence,
$$
\frac{2\pi}{N}\Bigl(\log \frac{1}{U(N)}+\log N^{\frac{2p-2}{2}}
+\log \Bigl||J_N(K^{*}_{p>0};q)|+2N^{2p-2}\Bigr|\Bigr)
\geq {\rm Im}[f(a_1,\ldots ,a_{2p-1})].
$$
Because of Inequality (\ref{124}), we have that
\begin{align*}
\frac{2N^{2p-2}}{|J_N(K^{*}_{p>0};q)|}
&\leq \frac{2N^{2p-2}}{\exp (\frac{N}{2\pi}{\rm Im}[f(\exp (2\pi \sqrt{-1}n_1/N),\ldots ,
\exp (2\pi \sqrt{-1} n_{2p-1}/N))])}\\
&\to 0 \ \ (N\to \infty).
\end{align*}
Since
$$
\frac{2 \pi \log \Bigl||J_N(K^{*}_{p>0};q)|+2N^{2p-2}\Bigr|}{N}\overset{N}{\sim}
\frac{2 \pi \log |J_N(K^{*}_{p>0};q)|}{N},
$$
and
$$
\lim_{N\to \infty}\frac{2\pi}{N}\log N = \lim_{N\to \infty}\frac{2\pi}{N}\log \frac{1}{U(N)}=0
$$
hold, we obtain the following formula:
$$
{\rm Im}[f(a_1,\ldots,a_{2p-1})]\leq \lim_{N\to \infty}\frac{2 \pi \log |J_{N}(K^{*}_{p>0};q)|}{N}.
$$
\end{proof}

\begin{thm}\label{119}
Theorem \ref{97} is true.
\end{thm}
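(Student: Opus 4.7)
The plan is to combine the two directional bounds already established, Lemma \ref{109} and Lemma \ref{115}, into a single equality, and then transfer the statement across mirror images to cover every sign of the twist parameter. Concretely, Lemma \ref{109} gives the upper bound
$$
\limsup_{N\to \infty}\frac{2 \pi \log |J_N(K^{*}_{p>0};q)|}{N}\leq {\rm Im}[f(a_1,\ldots,a_{2p-1})],
$$
while Lemma \ref{115}, which relies on the non-triviality estimate $|J_N(K^{*}_{p>0};q)|\geq 1$ from Lemma \ref{121} together with the crude upper bound from Lemma \ref{114}, furnishes the matching lower bound
$$
{\rm Im}[f(a_1,\ldots,a_{2p-1})]\leq \liminf_{N\to \infty}\frac{2 \pi \log |J_N(K^{*}_{p>0};q)|}{N}.
$$
Hence the limit exists and equals ${\rm Im}[f(a_1,\ldots,a_{2p-1})]$. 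The paper states (just before Lemma \ref{93}, citing \cite{1}) that ${\rm Im}[f(a_1,\ldots,a_{2p-1})]={\rm Vol}(K_{p>0}^{*})$, so the conjecture is proved for the mirror of each positive twist knot.

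The remaining step is mirror symmetry. Since $q=\exp(2\pi\sqrt{-1}/N)$ lies on the unit circle, one has $\overline{q}=q^{-1}$ and $\overline{J_N(K;q)}=J_N(K;q^{-1})=J_N(K^{*};q)$; in particular $|J_N(K_{p>0};q)|=|J_N(K^{*}_{p>0};q)|$. Moreover, hyperbolic volume is invariant under taking the mirror image, so ${\rm Vol}(K_{p>0})={\rm Vol}(K_{p>0}^{*})$. Since the paper identifies $K_{-p}$ with the mirror image of $K_{p>0}$, the identities just obtained cover the three families $K_{p>0}$, $K_{p>0}^{*}$, and $K_{-p}$ simultaneously, and hence every oriented twist knot $K$ in the sense of Figure \ref{1006}.

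In outline the steps are: (i) assemble the two-sided bound from Lemmas \ref{109}, \ref{115}, \ref{121}, and \ref{114}, invoking Lemma \ref{99} in the spirit of a squeeze theorem to guarantee that the limit of $\tfrac{2\pi}{N}\log |J_N(K^{*}_{p>0};q)|$ exists; (ii) identify this common value with ${\rm Im}[f(a_1,\ldots,a_{2p-1})]$, which by \cite{1} equals ${\rm Vol}(K^{*}_{p>0})$; (iii) use complex conjugation at roots of unity to move from $K^{*}_{p>0}$ to $K_{p>0}$, and the same identity to reach $K_{-p}$. I expect the main obstacle to be verifying that the inequalities in Lemmas \ref{109} and \ref{115} really correspond to genuine $\limsup$ and $\liminf$ statements that pinch each other, rather than just formal inequalities on unproven limits; this is handled by noting that each was derived by sandwiching the same sequence $\frac{2\pi}{N}\log\bigl(|(q)_{n_{2p-1}}|\prod_i|[n_{i+1}/n_i]|\bigr)$ between expressions that are asymptotically equivalent, via Lemma \ref{106}, to ${\rm Im}[f({\rm exp}(2\pi \sqrt{-1}n_1/N),\ldots)]$, whose limit is ${\rm Im}[f(a_1,\ldots,a_{2p-1})]$ by the display (\ref{105}). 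Once this is in place, the remaining manipulations are purely formal.
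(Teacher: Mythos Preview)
Your proposal is correct and follows essentially the same approach as the paper: the paper's entire proof reads ``Because of Lemma \ref{99}, Lemma \ref{109} and Lemma \ref{115}, we obtain Theorem \ref{97},'' which is exactly your squeeze argument combining the upper bound (Lemma \ref{109}) and lower bound (Lemma \ref{115}) via Lemma \ref{99}. You go further than the paper by explicitly supplying the mirror-image step to pass from $K^{*}_{p>0}$ to $K_{p>0}$ and $K_{-p}$, a point the paper leaves implicit; your use of $\limsup/\liminf$ is also a more careful reading of what Lemmas \ref{109} and \ref{115} actually establish than the paper's own phrasing.
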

\begin{proof}
Because of Lemma \ref{99}, Lemma \ref{109}, and Lemma \ref{115}, we obtain Theorem \ref{97}.
\end{proof}

%
%
%
%

\end{document}